\def\bdi{\begin{diagram}}
\def\edi{\end{diagram}}
\newtheorem{thm}{Theorem}
\newtheorem{cor}[thm]{Corollary}
\newtheorem{lem}[thm]{Lemma}
\newtheorem{prop}[thm]{Proposition}
\theoremstyle{definition}
\newtheorem{defi}[thm]{Definition}
\newtheorem{defis}[thm]{Definitions}
\newtheorem{conj}[thm]{Conjecture}
\newtheorem{conv}[thm]{Convention}
\newtheorem{nota}[thm]{Notation}
\newtheorem{rem}[thm]{Remark}
\newtheorem{rems}[thm]{Remarks}
\newtheorem{exa}[thm]{Example}
\newtheorem{exas}[thm]{Examples}
\newtheorem{claim}[thm]{Claim}
\newcommand{\rien}[1]{}
\renewcommand{\P}{\mathbf{P}}
\newcommand{\C}{\ensuremath{\mathbf{C}}}
\newcommand{\D}{\ensuremath{\mathbf{D}}}
\newcommand{\Z}{\ensuremath{\mathbf{Z}}}
\newcommand{\cI}{{\ensuremath{\mathcal{I}}}}
\def\deg{\mathop{\rm deg}}
\def\Pic{\mathop{\rm Pic}}
\renewcommand{\epsilon}{\varepsilon}
\renewcommand{\phi}{\varphi}
\newcommand{\bnum}{\begin{enumerate}}
\newcommand{\enum}{\end{enumerate}}
\renewcommand{\leq}{\leqslant}
\renewcommand{\geq}{\geqslant}
\newcommand{\brem}{\begin{rem}}
\newcommand{\brems}{\begin{rems}}
\newcommand{\erem}{\end{rem}}
\newcommand{\erems}{\end{rems}}
\newcommand{\bexa}{\begin{exa}}
\newcommand{\bexas}{\begin{exas}}
\newcommand{\eexa}{\end{exa}}
\newcommand{\eexas}{\end{exas}}
\newcommand{\bdefi}{\begin{defi}}
\newcommand{\edefi}{\end{defi}}
\newcommand{\bdefis}{\begin{defis}}
\newcommand{\edefis}{\end{defis}}
\newcommand{\bcor}{\begin{cor}}
\newcommand{\ecor}{\end{cor}}
\newcommand{\blem}{\begin{lem}}
\newcommand{\elem}{\end{lem}}
\newcommand{\bconv}{\begin{conv}}
\newcommand{\econv}{\end{conv}}
\newcommand{\bconj}{\begin{conj}}
\newcommand{\econj}{\end{conj}}
\newcommand{\bprop}{\begin{prop}}
\newcommand{\eprop}{\end{prop}}
\newcommand{\bthm}{\begin{thm}}
\newcommand{\ethm}{\end{thm}}
\newcommand{\bnota}{\begin{nota}}
\newcommand{\enota}{\end{nota}}
\newcommand{\bsit}{\begin{sit}}
\newcommand{\esit}{\end{sit}}
\newcommand{\be}{\begin{eqnarray}}
\newcommand{\ee}{\end{eqnarray}}
\newcommand{\bproof}{\begin{proof}}
\newcommand{\eproof}{\end{proof}}
\def\ba{\begin{array}}
\def\ea{\end{array}}
\newcommand{\K}{\mathcal{K}}
\newcommand{\restr}[2]{\left. #1 \right| _{#2}}
\renewcommand{\O}{\mathcal{O}}
\renewcommand{\H}{\mathrm{H}}
\newcommand{\sev}[2]{V^{#1,#2}} 
\newcommand{\csev}[2]{\smash{\overline{V}}^{#1,#2}} 
\newcommand{\gsev}[2]{V^{#1}_{#2}} 
\newcommand{\stsev}[2]{V^{\smash{#1,#2}}_{\mathrm{st}}}
\newcommand{\cstsev}[2]{\smash{\overline V}^{\smash{#1,#2}}_{\mathrm{st}}}
\title[]{On the irreducibility of Severi varieties on $K3$ surfaces}
\author{C.~Ciliberto}
\address{Dipartimento di Matematica, Universit\`a degli
Studi di Roma ``Tor Vergata'', Via della Ricerca Scientifica,
00133 Roma, Italy} \email{cilibert@mat.uniroma2.it}
\author{Th.~Dedieu}
\address{Institut de Mathématiques de Toulouse~; UMR5219.
Université de Toulouse~; CNRS.
UPS IMT, F-31062 Toulouse Cedex 9, France.} 
\email{thomas.dedieu@math.univ-toulouse.fr}
\def\@acks{
The authors were members of project FOSICAV, which has received
funding from the European Union's Horizon 2020 research and innovation
programme under the Marie Sk{\l}odowska-Curie grant agreement
No~652782.
The first author has been supported by the Italian MIUR Project PRIN
2010--2011 ``Geometria delle varietà algebriche'' and by GNSAGA of
INdAM;
he also acknowledges the MIUR Excellence Department Project awarded to
the Department of Mathematics, University of Rome Tor Vergata, CUP
E83C18000100006.
}
\date{}
\begin{document}

\setdefaultenum{(i)}{}{}{}

\begin{abstract} 
Let $(S,L)$ be a polarized $K3$ surface of genus $p \geq 11$ such
that $\Pic(S)=\Z[L]$, and $\delta$ a non-negative integer.
We prove that if $p\geqslant 4\delta-3$, then the Severi
variety of $\delta$-nodal curves in $|L|$ is
irreducible.
\end{abstract}

\maketitle

\vfuzz=2pt
\thanks{}

\section{Introduction}

Given a polarized surface $(S,L)$ and an integer $\delta \geq 0$,
the \emph{Severi variety} $\sev L \delta$ is the parameter
space for irreducible, $\delta$-nodal curves in the linear system
$|L|$ (see \S~\ref{s:severi}).
This text is dedicated to the proof of the following result:
\begin{thm}
\label{t:main}
Let $(S,L)$ be a primitively polarized $K3$ surface of genus $p \geq
11$ such that $\Pic(S)=\Z [L]$,
and $\delta$ a non-negative integer such that $4\delta-3 \leq p$.
The Severi variety $\sev L \delta$ is irreducible.
\end{thm}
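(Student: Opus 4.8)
The plan is to reduce the statement to a connectedness assertion and then to establish connectedness by degenerating the surface. Since $\Pic(S)=\Z[L]$ and $L$ is a primitive polarization, every member of $|L|$ is irreducible, so the only delicate point in the definition of $\sev L\delta$ is the control of the singularities. Recall that $\dim|L|=p$ and that, because $K_S\cong\cO_S$, the theory of equisingular deformations on a $K3$ surface shows that $\sev L\delta$ is smooth of pure dimension $p-\delta$ wherever it is nonempty, nonemptiness in the whole range $0\le\delta\le p$ being known. Being smooth, $\sev L\delta$ is \emph{irreducible if and only if connected}, and the base case $\delta=0$ is clear since $\sev L0$ is a dense open subset of $|L|\cong\PP^p$. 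The hypothesis $4\delta-3\le p$ will enter only in the connectedness argument below.

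For connectedness I would follow the Harris template: show that the closure of every irreducible component of $\sev L\delta$ contains a common, connected locus of more degenerate curves, and that one can travel back from this locus to the open stratum while remaining inside a single component. The return trip is controlled by a monodromy statement — over a component $W$ the $\delta$ nodes form a degree-$\delta$ étale cover whose monodromy group should be the full symmetric group $\Sgoth_\delta$, the transpositions arising by colliding two nodes into a tacnode on a codimension-one boundary stratum and smoothing back. The genuinely new input is the identification and connectedness of the common degenerate locus, and here, because $\Pic(S)=\Z[L]$ forbids any splitting of the members of $|L|$, the classical device of breaking a curve into lines is unavailable.

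To create the needed reducibility I would degenerate the polarized surface itself, specialising $(S,L)$ to a reducible surface $S_0=R_1\cup_E R_2$, a union of two rational surfaces glued along a common anticanonical elliptic curve $E$, so that the limit linear system becomes reducible and the limit Severi problem breaks into Severi problems on $R_1$ and $R_2$, which are understood and whose relevant strata are irreducible in the appropriate numerical range. One then argues that every component of $\sev L\delta$ specialises into a single connected family of admissible limit curves on $S_0$, and conversely that these limit curves regenerate to honest $\delta$-nodal curves on the general $K3$; this transports connectedness from the central fibre to the general fibre. The numerical hypothesis $4\delta-3\le p$ enters here as the precise condition guaranteeing the cohomological vanishing that keeps the limit Severi scheme of the expected dimension and unobstructed, together with the nonemptiness and irreducibility of the pieces carried by $R_1$ and $R_2$.

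The main obstacle I anticipate is precisely this last step: controlling the limit Severi variety on $S_0$ and proving that the regeneration neither creates spurious components nor disconnects the space. Concretely, the curves on the central fibre can distribute their $\delta$ nodes between $R_1$, $R_2$ and the double curve $E$ in many combinatorial patterns, and one must show that all admissible distributions lie in one connected family and that each smooths to a genuine $\delta$-nodal curve on the general fibre. Managing this bookkeeping — the matching of branches along $E$, the transversality of the smoothing, and the uniform vanishing that keeps everything of expected dimension — is, I expect, where the bulk of the technical work lies and where the sharpness of the bound $4\delta-3\le p$ is really felt.
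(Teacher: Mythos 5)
Your proposal is a program, not a proof: every step that would actually require work is deferred. The reduction ``smooth $\Rightarrow$ (irreducible $\iff$ connected)'' is fine, but connectedness is then made to rest on three unproved pillars. First, the monodromy claim --- that over each component the $\delta$ nodes are permuted by the full symmetric group, with transpositions produced by tacnodal degenerations --- is asserted, not established; on a $K3$ with $\Pic(S)=\Z[L]$ you cannot import the classical plane-curve argument, and nothing in your sketch produces the required tacnodal strata inside an \emph{arbitrary} component. Second, and most seriously, the entire degeneration to $S_0=R_1\cup_E R_2$ is left as a black box: you do not define the limit Severi scheme, do not prove it has the expected dimension, do not classify the admissible distributions of nodes among $R_1$, $R_2$ and $E$, do not prove the resulting locus is connected, and do not prove regeneration. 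You acknowledge this yourself (``where the bulk of the technical work lies''), which is an admission that the proof is not there. Third, the claim that $4\delta-3\leq p$ is ``the precise condition guaranteeing the cohomological vanishing'' on the central fibre is pure speculation: no cohomology group is named, no vanishing is stated, and there is no reason a priori that this particular degeneration would reproduce exactly this bound. As it stands, nothing in the proposal uses the hypotheses $p\geq 11$ or $4\delta-3\leq p$ in a checkable way.

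For contrast, the paper's proof goes through none of this. It exploits the non-weak-defectiveness of $K3$ surfaces to produce a distinguished \emph{standard} component of $\sev L \delta$ (the unique one whose nodal map to $S^{[\delta]}$ is dominant, for $p\geq 3\delta$), attaches to any other component $V$ a positive \emph{excess} $h_V$, and proves $h_V\geq 3$ by analysing the singularities of curves in $\overline V\cap\cstsev L \delta$ (cusps, tacnodes, ramphoid cusps) via equisingular deformation theory and the Clifford index of curves in $|L|$. The preimage of the nodes then gives a $g^{h_V}_{2\delta}$ on the normalisation of a general $C\in V$, and the Brill--Noether results of Ciliberto--Knutsen and Knutsen--Lelli-Chiesa--Mongardi for curves on $K3$ surfaces with $\Pic(S)=\Z[L]$ turn $h_V\geq 3$ into the numerical contradiction $p\leq 4\delta-4$. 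If you want to pursue your degeneration strategy, be aware that it is essentially the (still open) general approach to irreducibility of Severi varieties on $K3$ surfaces, and that all of its difficulty is concentrated exactly in the steps you have postponed.
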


It had already been proven by Keilen \cite{keilen} that in the
situation of Theorem~\ref{t:main}, for all integer $k\geq 1$ the 
Severi variety $\sev {kL} \delta$ is irreducible if
\begin{equation*}
\delta <
\frac {6(2p-2)+8}
{\bigl(11(2p-2)+12\bigr)^2}
\cdot k^2 \cdot (2p-2)^2
\qquad \left(
\sim_{p \to \infty} \frac {12}{121}\cdot k^2\cdot p
\right),
\end{equation*}
and later by Kemeny \cite{kemeny} that the same holds if
$\delta \leq \frac 1 6 \bigl(2+k(p-1)\bigr)$.
Our result is valid only in the case $k=1$, i.e., for curves in the
\emph{primitive class}, but in this case our condition is better.
In a slightly different direction, we have proven some time ago in
\cite{CD} that 
the \emph{universal families} of the $\sev L \delta$'s are irreducible
for all $\delta$ ($\delta = p$ included) if $3 \leq p \leq 11$ and
$p\neq 10$.

Kemeny's result is based on the observation that for any smooth
polarized surface $(S,L)$, the Severi variety $\sev L \delta$ is
somehow trivially irreducible if $L$ is $(3\delta-1)$-very ample: 
Indeed, in this case the curves in $|L|$ with nodes at
$p_1,\ldots,p_\delta$ form a dense subset of a projective space of
constant dimension
for \emph{any} set of pairwise distinct points
$p_1,\ldots,p_\delta$.
Kemeny then applies a numerical criterion for $n$-very ampleness
on $K3$ surfaces due to Knutsen \cite{knutsen}.

The central idea of the present article is close in spirit to Kemeny's
observation, to the effect that provided $\dim |L| \geq 3\delta$, 
the curves in $|L|$ with nodes at
$p_1,\ldots,p_\delta$ should form in nice circumstances a dense subset
of a projective space of constant dimension
 for a \emph{general} choice of $\delta$ pairwise disjoint points.
It is indeed so for curves in the primitive class of
a $K3$ surface, thanks to a result of Chiantini and the first-named
author, see Proposition~\ref{prop:dk3}.
One thus gets a distinguished irreducible component of the Severi
variety $\sev L \delta$ which we call its \emph{standard component}. 
For any other irreducible component $V$, the nodes of the members of
$V$ sweep out a locus of positive codimension $h_V$ in the Hilbert
scheme $S^{[\delta]}$, see Section~\ref{S:standard};
we call $h_V$ the excess of $V$.

Our applications then rely on the observation
that, in the $K3$ situation of Theorem~\ref{t:main}, for all $C \in V$ the preimage of the nodes defines a linear
series of type $g^h_{2\delta}$ on the normalisation of $C$
(see Lemma~\ref{l:dim-deltatilde}),
together with some recent results in \cite{CK} and \cite{KLM} 
(Theorems~\ref{t:CK} and \ref{t:KLM} respectively) 
which give some control on the families
of linear series that may exist on the normalisations of primitive
curves on $K3$ surfaces.
The latter results hold only for curves in the primitive class, and
this is the main obstruction to carry out our approach in the
non-primitive situation.

One may for instance give a two-lines proof of irreducibility in the
range $p \geq 5\delta-3$, as follows.
Assume by contradiction that there is a non-standard irreducible
component $V$ of the Severi variety $\sev L \delta$. Then for all $C
\in V$ the normalisation of $C$ has a $g^1_{2\delta}$. 
By \cite{KLM} this implies
$\dim(V) = p-\delta \leq 4\delta-2$, 
which is impossible in the range under consideration.

We obtain the better bound in Theorem~\ref{t:main} by proving the
estimate $h_V >2$ for all non-standard components of $\sev L \delta$. 
This is done in Section~\ref{S:estim-h} by a careful study of the
singularities of curves in the intersection of the standard component
with a hypothetical non-standard component, which we are again able to
control thanks to Brill--Noether theoretic results for singular curves
on $K3$ surfaces.

\medskip
This work originates from the Oberwolfach Mini-Workshop: Singular
Curves on $K3$ Surfaces and Hyperkähler Manifolds. We thank all the
participants for the friendly atmosphere and stimulating discussions.

\section{Preliminaries}

\subsection {Severi varieties}
\label{s:severi}
We work over $\C$ throughout the text.
We denote by $\mathcal K_p$ the irreducible, 19-dimensional stack of
primitively polarized $K3$ surfaces $(S,L)$ of genus $p\geqslant 2$,
i.e., $S$ is a compact, complex surface with $h^ 1(S,\mathcal O_S)=0$
and $\omega_S\cong \mathcal O_S$, and $L$ a big and nef, primitive
line bundle on $S$ with $L^ 2=2p-2$, hence $\dim(|L|)=p$. The
\emph{arithmetic genus} of the curves $C\in |L|$ is $p_a(C)=p$.

In this paper we will often assume that $\Pic (S)=\Z [L]$, which is
the case if $(S,L)\in \mathcal K_p$ is very general, so that $L$ is
globally generated and ample, and very ample if $p\geqslant 3$.

For any non-negative integer $g\leqslant p$, we consider the locally
closed subset $\gsev L g$ of $|L|$ consisting of curves $C\in |L|$ of
\emph{geometric genus} $p_g(C)=g$, i.e., curves $C$ whose
normalization has genus $g$ (see \cite [\S~1.2] {DeSe}). We will set
$\delta=p-g$, which is usually called the \emph{$\delta$-invariant} of
the curve.

\begin{prop}[see {\cite[Proposition~4.5]{DeSe}}]
\label{prop:dim}
Every irreducible component of  $\gsev L g$ has dimension $g$.
\end{prop}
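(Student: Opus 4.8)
The plan is to bound the dimension of an arbitrary irreducible component $V$ of $\gsev L g$ both from above and from below by $g$; the upper bound is where the $K3$ hypothesis enters decisively, through the triviality of $\omega_S$, while the lower bound is a general feature of equigeneric loci.

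For the upper bound I would estimate the Zariski tangent space to $\gsev L g$ at a general member $C\in V$ by means of the normalization $\nu\colon \widetilde C\to C\hookrightarrow S$, with $\widetilde C$ smooth of genus $g$. Since $\Pic(S)=\Z[L]$ forces every member of $|L|$ to be integral, and since $h^1(S,\cO_S)=0$, one has $T_{[C]}|L|=H^0(C,N_{C/S})$. First-order deformations of $C$ preserving the geometric genus lift to first-order deformations of the map $\nu$, and the resulting directions are contained in the image of the equigeneric normal sheaf, namely the subsheaf $\nu_*\big(\nu^*N_{C/S}(-\mathfrak d)\big)\hookrightarrow N_{C/S}$, where $\mathfrak d$ is the conductor divisor on $\widetilde C$, of degree $\deg\mathfrak d=2\delta$. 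The key computation is then an application of adjunction: since $\omega_S\cong\cO_S$, one has $N_{C/S}\cong\omega_C$, so that combined with the standard conductor identity $\omega_{\widetilde C}\cong\nu^*\omega_C(-\mathfrak d)$ this gives
\[
\nu^*N_{C/S}(-\mathfrak d)\;\cong\;\nu^*\omega_C(-\mathfrak d)\;\cong\;\omega_{\widetilde C}.
\]
Hence the equigeneric tangent directions form the subspace $H^0(\widetilde C,\omega_{\widetilde C})\hookrightarrow H^0(C,N_{C/S})$, of dimension exactly $g$, and therefore $\dim V\leq \dim T_{[C]}\gsev L g\leq g$. The degree bookkeeping is automatically consistent: $\deg\nu^*N_{C/S}=C^2=2p-2$, so twisting by $-\mathfrak d$ lands in degree $2p-2-2\delta=2g-2=\deg\omega_{\widetilde C}$.

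For the lower bound I would invoke the general principle that $\{p_g\leq g\}\subset|L|$ has codimension at most $\delta=p-g$: a singularity of $\delta$-invariant $\delta_P$ lies in a $\delta_P$-codimensional $\delta$-constant stratum of its semiuniversal deformation, so imposing geometric genus $\leq g$ globally costs at most $\delta$ conditions. As a general member of $V$ attains geometric genus exactly $g$, the closure $\overline V$ is a component of $\{p_g\leq g\}$, whence $\dim V=\dim\overline V\geq p-\delta=g$. Together with the previous step this yields $\dim V=g$.

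The main obstacle is the first step: identifying the equigeneric first-order deformations with (a subspace of) $H^0(\widetilde C,\omega_{\widetilde C})$ in a way valid for arbitrary, not merely nodal, singularities. This requires care with the torsion of the normal sheaf $N_\nu$ of the generically unramified but possibly non-immersive map $\nu$, and with the precise relationship between equisingular, equigeneric, and equinormalizable deformations. It is precisely the adjunction $N_{C/S}\cong\omega_C$ together with the conductor identity $\omega_{\widetilde C}\cong\nu^*\omega_C(-\mathfrak d)$ that makes the $K3$ case collapse to the clean value $h^0(\widetilde C,\omega_{\widetilde C})=g$, so that the triviality of $\omega_S$ is responsible for the sharp equality rather than a mere inequality.
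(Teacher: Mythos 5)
Your proposal is correct and follows essentially the same route as the source the paper cites for this statement (the paper itself gives no proof, quoting it from \cite[Proposition~4.5]{DeSe}): the upper bound $\dim V\leq g$ via the inclusion of the equigeneric tangent directions at a general $C\in V$ into $\H^0(C,N_{C/S}\otimes A)\cong \H^0(\tilde C,\omega_{\tilde C})$, using $\omega_S\cong\O_S$ together with the conductor identity, and the lower bound via the general codimension-at-most-$\delta$ estimate for equigeneric strata. The step you rightly single out as the crux --- that for the \emph{general} member of an irreducible component the first-order equigeneric deformations are controlled by the conductor (adjoint) ideal even for non-nodal singularities --- is exactly the Diaz--Harris/Teissier input invoked in \cite{DeSe}, so modulo that reference your argument is complete.
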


For every non-negative integer $\delta\leqslant p$, we will denote by
$\sev L \delta$ the \emph{Severi variety}, i.e., the locally closed
subset of $|L|$ consisting of curves with $\delta$ nodes and no other
singularities, whose geometric genus is $g=p-\delta$. The following is
classical:

\begin {prop} [see {\cite[\S 3--4]{DeSe}}]
\label {prop:sevv}
The Severi variety $\sev L \delta$, if not empty, is smooth and pure of
dimension $g$. More precisely, if $C\in \sev L \delta$, and $\Delta$ is
the set of nodes of $C$, then the projective tangent space to
$\sev L \delta$ at $C$ in $|L|$ is the $g$-dimensional linear system
$|L(-\Delta)|:=\P (\H^ 0(S, L \otimes \mathcal I_{\Delta,S}))$ of
curves in $|L|$ containing $\Delta$.
\end{prop}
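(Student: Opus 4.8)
The plan is to reduce the statement to three ingredients: a local computation identifying the tangent directions to $\sev L \delta$ as those sections of $L$ passing through the nodes, the equality $\dim |L(-\Delta)| = g$, and the dimension statement of Proposition~\ref{prop:dim}. The first two will produce the inclusion of tangent spaces and the correct dimension, and combining them with Proposition~\ref{prop:dim} will force smoothness.

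First I would compute the projective tangent space. Fix $C \in \sev L\delta$ with node set $\Delta = \{p_1,\dots,p_\delta\}$, corresponding to a section $s \in \H^0(S,L)$. A tangent vector to $|L|$ at $[C]$ is the class of a section $t \in \H^0(S,L)$ modulo $\C s$, giving the first-order deformation $s + \epsilon t$. In a local analytic coordinate at a node, where $s = xy + (\text{higher order})$, the versal deformation of the node is $xy = \lambda$, and one checks that $s + \epsilon t$ stays $\delta$-nodal to first order near $p_i$ precisely when $t(p_i)=0$. Hence a first-order arc remains in $\sev L\delta$ only if $t$ vanishes at every node, so that, as projective subspaces of $|L|$,
\[
T_{[C]}\,\sev L\delta \subseteq |L(-\Delta)| = \P\bigl(\H^0(S, L \otimes \mathcal I_{\Delta,S})\bigr).
\]

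The heart of the matter is then to prove $\dim |L(-\Delta)| = g$, i.e.\ that the $\delta$ nodes impose independent conditions on $|L|$. Here I would exploit the $K3$ adjunction: since $\omega_S \cong \mathcal O_S$ one has $L|_C \cong \omega_C$, and the sequence $0 \to \mathcal O_S \to L \to \omega_C \to 0$ together with $h^1(\mathcal O_S)=0$ shows that $\H^0(S,L) \twoheadrightarrow \H^0(C,\omega_C)$. It therefore suffices to see that $\Delta$ imposes independent conditions on $|\omega_C|$, i.e.\ that $\H^0(C,\omega_C) \to \H^0(\Delta, \omega_C|_\Delta) = \C^\delta$ is onto. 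Letting $\nu\colon \tilde C \to C$ be the normalisation, I would read this off from $0 \to \omega_C \otimes \mathcal I_{\Delta,C} \to \omega_C \to \omega_C|_\Delta \to 0$ by Serre duality on the integral Gorenstein curve $C$: one has $\H^1(C, \omega_C \otimes \mathcal I_{\Delta,C})^\vee \cong \H^0\bigl(C,\mathcal{H}om_{\mathcal O_C}(\mathcal I_{\Delta,C}, \mathcal O_C)\bigr)$, and the local computation at a node (where $\mathcal I_{\Delta,C}$ is the maximal ideal of $\C[[x,y]]/(xy)$, whose dual is the normalisation) gives $\mathcal{H}om_{\mathcal O_C}(\mathcal I_{\Delta,C}, \mathcal O_C) \cong \nu_* \mathcal O_{\tilde C}$. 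Thus $h^1(\omega_C\otimes\mathcal I_{\Delta,C}) = h^0(\mathcal O_{\tilde C}) = 1 = h^1(\omega_C)$, and the connecting map $\H^1(\omega_C\otimes\mathcal I_{\Delta,C}) \to \H^1(\omega_C)$ is Serre-dual to the natural isomorphism $\H^0(\mathcal O_C) \to \H^0(\nu_*\mathcal O_{\tilde C})$, hence itself an isomorphism; the desired surjectivity follows. Consequently $h^0(S, L\otimes\mathcal I_{\Delta,S}) = h^0(S,L) - \delta = p+1-\delta$, so $\dim|L(-\Delta)| = g$.

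Finally I would assemble the pieces. The variety $\sev L\delta$ is an open subset of $\gsev L g$ (having only nodes is an open condition among curves of geometric genus $g$), so by Proposition~\ref{prop:dim} its local dimension at $[C]$ equals $g$. Combined with the general inequality $\dim_{[C]}\sev L\delta \leq \dim T_{[C]}\sev L\delta$ and the two computations above, this yields
\[
g = \dim_{[C]}\sev L\delta \;\leq\; \dim T_{[C]}\,\sev L\delta \;\leq\; \dim|L(-\Delta)| = g.
\]
All terms being equal, $\sev L\delta$ is smooth of dimension $g$ at $[C]$; and since $T_{[C]}\sev L\delta \subseteq |L(-\Delta)|$ is an inclusion of linear spaces of the same dimension, it is an equality, giving the projective tangent space. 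Purity follows because this holds at every point. The main obstacle is the middle paragraph: the independence of the conditions does not follow from positivity of $L$ alone (which is exactly why one cannot merely invoke very-ampleness), but genuinely uses that $\Delta$ is the singular scheme of an irreducible Gorenstein curve — the content of the duality computation, equivalent to the classical fact that adjoint curves cut the complete canonical series on the normalisation.
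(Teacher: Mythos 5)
Your proof is correct, and it follows the same skeleton as the classical argument the paper merely cites from \cite{DeSe}: identify the tangent space inside $|L(-\Delta)|$ by the local first-order computation at a node, show $\dim|L(-\Delta)|=g$, and close the sandwich with a lower bound on the local dimension. The two places where you genuinely deviate are worth noting. First, for the independence of the conditions imposed by the nodes, the source (and the paper itself, in its ``useful lemma'' of \S 3.2) uses the adjoint-series identity $\nu^*|L\otimes A|=|\omega_{\tilde C}|$, where $A=\mathcal I_{\Delta,S}\otimes\O_C$ is the conductor, so that $h^0(C,\omega_C\otimes A)=h^0(\omega_{\tilde C})=g$ directly; you instead run Serre duality on the Gorenstein curve $C$ and compute $\mathcal{H}om_{\O_C}(\mathcal I_{\Delta,C},\O_C)\cong\nu_*\O_{\tilde C}$ locally at each node. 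These are two packagings of the same local fact (the dual of the maximal ideal of a node is the normalisation), and both are sound; yours has the small merit of never leaving $C$. Second, for the lower bound $\dim_{[C]}\sev L\delta\geq g$ you invoke Proposition~\ref{prop:dim} together with the openness of the nodal locus inside $\gsev L g$, whereas the classical route observes that the $\delta$ nodes impose at most $\delta$ conditions on the equisingular deformation functor, giving $\dim\geq p-\delta$ without reference to the equigeneric stratification; your route is legitimate and non-circular here (the dimension statement for $\gsev L g$ in \cite{DeSe} is proved via the deformation theory of the normalisation map, independently of the present proposition), but the openness of $\sev L\delta$ in $\gsev L g$ --- which you assert in one parenthesis --- does require the semicontinuity of the local $\delta$-invariant and deserves at least a sentence. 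No gaps otherwise.
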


\noindent
It is indeed true that the Severi varieties of a general primitively
polarized $K3$ surface are non-empty.

\begin{prop}[see \cite {XCh99}] 
If $(S,L)\in \mathcal K_p$ is general,
then $\sev L \delta$ is not empty for every non-negative integer
$\delta\leqslant p$.
\end{prop}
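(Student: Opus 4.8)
The plan is to reduce the statement to its extremal instance $\delta = p$ and to treat that case by degeneration. For the general $(S,L) \in \mathcal K_p$ one has $\Pic(S) = \Z[L]$, so $L$ is a primitive generator of the Picard group and every member of $|L|$ is automatically irreducible; it thus suffices to control the singularities. I claim it is enough to produce a single rational curve $C \in |L|$ with exactly $p$ nodes, that is, a point of $\sev L p$. Indeed, by Proposition~\ref{prop:sevv} the $p$ nodes of such a $C$, say $\Delta = \{x_1,\dots,x_p\}$, impose independent conditions on $|L|$, since the tangent space $|L(-\Delta)|$ to $\sev L p$ at $C$ has the expected dimension $g = p - p = 0$. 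Working in the germ at $[C]$ of the $p$-dimensional system $|L|$, the locus $D_i$ of curves that keep a singular point near $x_i$ is a smooth hypersurface, and the independence of the conditions is exactly the statement that $D_1, \dots, D_p$ cross normally (standard local deformation theory, cf.\ \cite{DeSe}). Hence for any subset $I$ of cardinality $\delta$ the intersection $\bigcap_{i \in I} D_i$ is smooth of dimension $p - \delta$, and its general member is a curve whose only singularities are nodes at $\{x_i : i \in I\}$, the remaining $p - \delta$ nodes having been smoothed. Such a curve lies in $\sev L \delta$, and letting $\delta$ run from $0$ to $p$ yields the proposition.

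There remains the existence of the rational $p$-nodal curve on the general K3 surface, which is \cite{XCh99} and which I would obtain by degeneration. Using that $\mathcal K_p$ is irreducible, I choose a one-parameter family of polarized surfaces whose general fibre $(S_t,L_t)$ is a general member of $\mathcal K_p$ and whose central fibre is a Type~II degeneration $X_0 = R_1 \cup_E R_2$, the transverse union of two rational surfaces along a common anticanonical elliptic curve $E$, equipped with a limit line bundle $L_0$ restricting to $L_t$. On $X_0$ I build a connected curve $C_0 = A_1 \cup A_2$ with $A_i \subset R_i$ a nodal rational curve in the restricted class and $A_1 \cap E = A_2 \cap E$, arranged so that $C_0$ has geometric genus $0$, arithmetic genus $p$, and the maximal number of nodes. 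Here the rationality of $R_1$ and $R_2$ is what gives room: curves of prescribed class and number of nodes are supplied by the classical Severi theory on rational surfaces, and one has enough freedom to make the two pieces meet $E$ in the same points and to distribute the nodes so that the count works for every $p$. (The concrete picture $\sigma \cup F_1 \cup \dots \cup F_p$ on an elliptic K3 surface, with $\sigma$ a section and the $F_i$ nodal fibres, is only a model for small $p$, the number of nodal fibres being at most $24$.)

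The crux is the smoothing step: I must show that $C_0$ regenerates, as the central fibre is smoothed to the general $S_t$, to an irreducible rational curve $C_t \in |L_t|$ with exactly $p$ nodes. This is the technical heart of \cite{XCh99}: it amounts to proving that $C_0$ is a smooth point, of the expected dimension, of the relative Severi scheme dominating the base of the family, equivalently that the obstruction to the prescribed partial smoothing --- living in an $H^1$ of an equisingular normal sheaf on the singular surface $X_0$ --- vanishes, so that the nodes along $E$ together with the superfluous nodes of the $A_i$ may be smoothed while the desired $p$ nodes persist. Granting this, $C_t$ is the required curve and the conclusion follows, the hypothesis $\Pic(S) = \Z[L]$ being automatic for the general member of $\mathcal K_p$. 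I expect this obstruction-vanishing analysis on the singular central fibre to be the one genuinely hard point; the reduction of the first paragraph and the construction of $C_0$ are comparatively formal.
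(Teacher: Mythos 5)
The paper offers no proof of this statement at all: it is quoted from \cite{XCh99}, so there is no internal argument to compare yours against. Your first paragraph, the reduction to the extremal case $\delta=p$, is correct and is in substance Proposition~\ref{prop:ext} of the paper: once a single irreducible $p$-nodal (hence rational) curve $C\in|L|$ exists, the fact that $\dim|L(-\Delta)|=0$ (Proposition~\ref{prop:sevv}) means the map $\H^0(C,N_{C/S})\to T^1_C$ is surjective, so any prescribed subset of the nodes can be smoothed while the others persist, and the hypothesis $\Pic(S)=\Z[L]$ with $L$ primitive guarantees that every member of $|L|$ is irreducible and reduced. That part is complete.

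The gap is that the entire mathematical content of the proposition sits in the step you defer. The existence of an irreducible rational curve in $|L|$ with exactly $p$ nodes on the general $(S,L)\in\mathcal K_p$ is the theorem of \cite{XCh99}, and your second and third paragraphs describe its architecture --- degeneration to a reducible surface with rational components, construction of a maximally degenerate limit curve $C_0$, regeneration to the nearby smooth fibre --- without establishing any of it: neither the existence of $C_0$ with the required combinatorics for arbitrary $p$ (your own parenthetical remark about the bound of $24$ nodal fibres shows the naive model breaks down), nor, crucially, the obstruction analysis showing that the regenerated curve is irreducible with \emph{exactly} $p$ nodes and no worse singularities. The latter point matters: the soft Mori--Mukai-type argument produces \emph{some} rational curve in $|L|$, but the proposition needs a \emph{nodal} one, and controlling the singularities under regeneration is precisely the hard part of Chen's paper. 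So as a reading of the literature your outline is faithful, but as a proof it reduces the statement to the result it is meant to establish.
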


By Propositions \ref {prop:dim} and \ref {prop:sevv}, each irreducible
component of $\sev L \delta$ is dense in a component of $V^ L_{g}$. 
Xi Chen \cite{XCh-pre16} has shown that moreover if $g>0$, then 
$\sev L \delta$ is dense in $\gsev L g$ for general $(S,L) \in \K_p$.
We shall need the following weaker result, in which however the
generality assumption is explicit.\footnote
{Actually, the assumption in \cite[Proposition~4.8] {DeSe} is that
$(S,L)$ be very general; it is straightforward to check that the
condition $\Pic(S)=L$ is indeed sufficient for the proof in 
\cite{DeSe}.}

\begin{prop}[{\cite[Proposition~4.8] {DeSe}}] 
\label{prop:sev}
Let $(S,L)\in\mathcal K_p$ be such that $\Pic (S)=\Z [L]$. If
$2\delta <p$, then $\sev L \delta$ is dense in $\gsev L g$.
\end{prop}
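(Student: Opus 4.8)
The plan is to reduce the statement to a nodality assertion and then settle that by a local-to-global deformation argument. As already observed in the text, Propositions~\ref{prop:dim} and~\ref{prop:sevv} imply that every irreducible component of $\sev L \delta$ is dense in a component of $\gsev L g$, both loci being pure of dimension $g$. The asserted density is therefore equivalent to the statement that the general member of \emph{every} irreducible component $V$ of $\gsev L g$ has exactly $\delta$ nodes and no further singularity. So I would fix such a component $V$, take a general member $C$, and set out to prove that $C$ is $\delta$-nodal.

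The next step is to invoke the local theory of equigeneric (that is, $\delta$-constant) deformations: for a reduced plane curve singularity it is classical that nodal curves are dense in the $\delta$-constant stratum of the semiuniversal deformation, a general such deformation replacing a singularity of local $\delta$-invariant $m$ by $m$ nodes. Consequently, to force the general member of $V$ to be nodal it is enough to realise these local deformations globally: concretely, that the natural map from the tangent space $T_C\, \gsev L g$ to the product, over the singular points of $C$, of the local equigeneric tangent spaces is surjective. When this holds, $\gsev L g$ is étale-locally at $C$ a product of the local $\delta$-constant deformation spaces with a smooth factor, so that local density of nodal curves propagates to density in $V$; since $C$ was chosen general, $C$ itself is then $\delta$-nodal, and we are done.

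The surjectivity in the previous step is governed by a cohomology vanishing on $S$, and this is where the main obstacle lies. Writing $\nu\colon \tilde C \to C \subset S$ for the normalisation, the conditions carved out at the singular points are controlled by the conductor scheme of $\nu$, whose degree equals $2\delta$ (for a $\delta$-nodal curve this is simply the number $2\delta$ of points of $\nu^{-1}(\Sing C)$, matching the degree of the series $g^h_{2\delta}$ appearing later in the introduction). The global-to-local map is surjective as soon as $\H^1(S, \mathcal I_{Z,S}\otimes L)=0$, where $Z\subset S$ is the subscheme of length bounded by this degree $2\delta$. It is exactly here that both hypotheses enter: the assumption $\Pic(S)=\Z[L]$ forbids any special effective divisor through $Z$ and thereby forces $Z$ to impose independent conditions, while the numerical bound $2\delta < p$ (equivalently $4\delta < L^2+2$, since $L^2 = 2p-2$) is precisely what guarantees the vanishing $\H^1(S, \mathcal I_{Z,S}\otimes L)=0$ for the singular scheme of an arbitrary curve of total $\delta$-invariant $\delta$, via the standard vanishing for ideal sheaves of short subschemes on a $K3$ surface of Picard number one.

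The crux of the whole argument is thus the \emph{uniform} control, over all singularity types of total $\delta$-invariant $\delta$, of the length and positivity of $Z$, so that the single inequality $2\delta<p$ suffices to kill the obstructing $\H^1$. Once that vanishing is in hand, the reduction of the second paragraph applies verbatim to every component $V$ of $\gsev L g$, and the density of $\sev L \delta$ in $\gsev L g$ follows.
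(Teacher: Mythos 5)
First, a remark on the ground truth: the paper does not prove Proposition~\ref{prop:sev} at all --- it is quoted from \cite[Proposition~4.8]{DeSe}, with a footnote noting only that the very-generality hypothesis there can be relaxed to $\Pic(S)=\Z[L]$. Your overall skeleton (reduce to nodality of the general member of every component of $\gsev L g$, use that nodal curves are dense in the local $\delta$-constant strata, and propagate this by a global-to-local surjectivity) is indeed the framework of \cite[\S 4]{DeSe}. The gap is in the step where you establish that surjectivity.

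You claim that $2\delta<p$ together with $\Pic(S)=\Z[L]$ yields $\H^1(S,\mathcal I_{Z,S}\otimes L)=0$ for the relevant scheme $Z$ of length bounded by $2\delta$, ``via the standard vanishing for ideal sheaves of short subschemes''. No such vanishing holds in this range. Surjectivity of $\H^0(S,L)\to\H^0(L\otimes\mathcal O_Z)$ for \emph{all} $Z$ of length $\ell$ is exactly $(\ell-1)$-very ampleness of $L$, which by Knutsen's criterion (the one invoked for Kemeny's result in the introduction) requires $L^2\geq 4(\ell-1)$ when $\Pic(S)=\Z[L]$; for $\ell=2\delta$ this reads $p\geq 4\delta-1$ --- essentially the hypothesis of the paper's \emph{main} theorem, not the hypothesis $2\delta<p$ of this proposition. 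Nor does $\Pic(S)=\Z[L]$ save you, because the failure need not be divisorial: if $Z$ contains a divisor $Z_0$ moving in a pencil on a curve $C'\in|L|$, then $h^1(S,\mathcal I_{Z_0,S}\otimes L)\geq h^0(C',\mathcal O_{C'}(Z_0))-1>0$, and since the gonality of such curves is roughly $(p+3)/2$, schemes of this kind of length $\leq 2\delta$ exist whenever $4\delta\geq p+3$, which is compatible with $2\delta<p$ (e.g.\ $p=11$, $\delta=5$). Ruling out that the singular schemes actually arising from an equigeneric family are special in this Brill--Noether sense is precisely the content of the proof in \cite{DeSe}: there one bounds $h^0(C,\omega_C(-E))$ for the generalized divisor $E$ cut out on the singular curve $C$ by the relevant ideal, using Riemann--Roch and Serre duality for generalized divisors and the equality ${\rm Cliff}(C)=\lfloor\frac{p-1}2\rfloor$ of Theorem~\ref{t:GL-BFT}; this is the very computation reproduced in subcase~(b) of the proof that $h_V\neq 2$ in the present paper. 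Your proposal omits this step, which is the heart of the matter. A secondary point: surjectivity of $T_C\gsev L g$ onto the tangent spaces of the local $\delta$-constant strata does not give the asserted local product structure, since those strata are singular (already for an ordinary cusp); the correct sufficient condition is smoothness of the map from the germ of $|L|$ at $C$ to the product of the semiuniversal deformation spaces, i.e.\ surjectivity of $\H^0(C,N_{C/S})\to\H^0(T^1_C)$, and it is exactly in making do with a weaker, Clifford-theoretic substitute for this that the cited proof earns the bound $2\delta<p$.
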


\subsection{Local structure of Severi varieties}
The following is a restatement of the well-known fact that the nodes
of a nodal curve on a $K3$ surface may be smoothed independently.
It is a consequence of Proposition~\ref{prop:sevv}. 

\begin{prop}
\label{prop:ext}
Let $(S,L)\in \mathcal K_p$, 
$\delta < \epsilon$ be two non-negative integers, 
and $V$ be an irreducible component of
$\sev L \epsilon$. Consider a curve $C\in V$, and let
$\{p_1,\ldots,p_\epsilon\}$ be the set of its nodes.
Then:\\
\begin{inparaenum} 
\item [(i)] the Zariski closure $\csev L \delta$ of $\sev L
  \delta$ contains $V$;\\ 
\item [(ii)] locally around $C$, $\csev L \delta$ consists of
$\binom \epsilon \delta$ analytic sheets $\mathcal V_\mathfrak d$, which
are in $1:1$ correspondence with the subsets 
$\mathfrak d \subset \{p_1,\ldots,p_\epsilon\}$ of order $\delta$,
and such that when the general point $C'$ of $\mathcal
V_\mathfrak d$ specializes at $C$, the set of $\delta$ nodes of
$C'$ specializes at $\mathfrak d$;\\
\item [(iii)] for each such $\mathfrak d$, the sheet $\mathcal
V_\mathfrak d$ is smooth at $C$ of dimension $p-\delta$, relatively
transverse to all other similar sheets.\footnote
{in the sense that for all $\mathfrak d'$ of cardinality $\delta$,
the sheets $\mathcal V_{\mathfrak d}$ and $\mathcal V_{\mathfrak d'}$
intersect exactly along the local sheet 
$V_{\mathfrak d \cup \mathfrak d'}$ of 
$\csev L {|{\mathfrak d \cup \mathfrak d'}|}$ at $C$, 
and their respective tangent spaces at $C$ intersect exactly along the
tangent space of
$V_{\mathfrak d \cup \mathfrak d'}$ at $C$.}
\end{inparaenum} 
\end{prop}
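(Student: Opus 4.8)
The plan is to derive everything from the smoothing theory encoded in Proposition~\ref{prop:sevv}, applied both to the big system $\sev L \epsilon$ and to the smaller ones $\sev L \delta$. The key geometric input is that on a $K3$ surface the nodes of a nodal curve impose independent conditions in a very strong sense: the tangent space to $\sev L \delta$ at a $\delta$-nodal curve with node set $\Delta$ is exactly the linear system $|L(-\Delta)|$ of curves through $\Delta$, and—because $K3$ surfaces have no obstructions to deforming nodal curves—each subset of the nodes can be independently smoothed. Concretely, I would start from $C\in V\subset \sev L \epsilon$ with nodes $\{p_1,\dots,p_\epsilon\}$, and for each $\delta$-element subset $\mathfrak d$ exhibit an analytic family of curves that keeps the $\delta$ nodes in $\mathfrak d$ while smoothing the remaining $\epsilon-\delta$ nodes.

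\textbf{Construction of the sheets.} For a fixed $\mathfrak d$ of cardinality $\delta$, consider the sub-family of $\csev L \delta$ whose $\delta$ nodes specialize to $\mathfrak d$ as the curve degenerates to $C$. The first step is to produce this $\cV_{\mathfrak d}$ explicitly: I would use the versal deformation of the $\epsilon$ nodes of $C$ inside $|L|$. Since $C$ is a curve on a $K3$ surface, its equisingular and its nodal deformations are unobstructed, so a neighbourhood of $C$ in $|L|$ maps smoothly to the product of the $\epsilon$ local node-smoothing parameters $(t_1,\dots,t_\epsilon)$ (this is precisely the content of the fact that the tangent space to $\sev L\epsilon$ is $|L(-\{p_1,\dots,p_\epsilon\})|$, together with the surjectivity of the restriction maps, which on a $K3$ with $\Pic(S)=\Z[L]$ is guaranteed because $h^1(L(-\Delta))=0$). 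The sheet $\cV_{\mathfrak d}$ is then cut out by requiring $t_j=0$ exactly for the $\delta$ indices $j$ with $p_j\in\mathfrak d$, leaving the other $\epsilon-\delta$ parameters free; by construction its general member has exactly the $\delta$ nodes in $\mathfrak d$ and no others, so it lies in $\sev L\delta$. There are $\binom{\epsilon}{\delta}$ choices of $\mathfrak d$, giving part~(ii), and since (i) just says $V=\cV_{\{p_1,\dots,p_\epsilon\}}\cap\cdots$ lies in the closure, it follows from letting all the free parameters degenerate.

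\textbf{Smoothness, dimension, and transversality.} Each $\cV_{\mathfrak d}$ is smooth at $C$ because it is the image of a coordinate subspace $\{t_j=0:p_j\in\mathfrak d\}$ under the smooth deformation map, and its dimension is $p-\delta$: the $\delta$ conditions $t_j=0$ cut down the $p$-dimensional system $|L|$ by exactly $\delta$. For transversality, the tangent space to $\cV_{\mathfrak d}$ at $C$ is the linear system $|L(-\mathfrak d)|$ of curves passing through the $\delta$ points of $\mathfrak d$, by Proposition~\ref{prop:sevv} applied to $\sev L \delta$. The intersection of two such tangent spaces is then
\[
|L(-\mathfrak d)|\cap|L(-\mathfrak d')| = |L(-(\mathfrak d\cup\mathfrak d'))|,
\]
which is the tangent space to the sheet $V_{\mathfrak d\cup\mathfrak d'}$; the scheme-theoretic statement follows because all the relevant conditions are independent (again using $h^1(L(-\Delta))=0$ for every $\Delta\subseteq\{p_1,\dots,p_\epsilon\}$, so that passing through any subset of the nodes imposes the expected number of conditions). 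This gives part~(iii).

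\textbf{Main obstacle.} The genuinely delicate point is not the dimension count but the claim that the nodes may be smoothed \emph{independently}, i.e., that the $\epsilon$ local smoothing parameters $t_1,\dots,t_\epsilon$ really are independent coordinates on a neighbourhood of $C$ in $|L|$. This is exactly where the $K3$ hypothesis enters: one must know that the natural map from the universal deformation to the product of local node-deformation spaces is smooth and surjective, which reduces to the vanishing $H^1(S,L\otimes\cI_{\Delta,S})=0$ for all subsets $\Delta$ of the node set. With $\Pic(S)=\Z[L]$ and $L$ big and nef, this vanishing holds because any effective divisor through $\Delta$ of the wrong numerical type would force a non-trivial class in $\Pic(S)$. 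I would isolate this vanishing as the technical heart of the argument and verify it carefully, after which the independence of the sheets—and hence the entire local picture—follows formally.
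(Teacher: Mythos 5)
Your proposal is correct and follows essentially the same route the paper intends: the paper gives no written proof, asserting the statement as the ``well-known fact that the nodes of a nodal curve on a $K3$ surface may be smoothed independently'' and a consequence of Proposition~\ref{prop:sevv}, and your argument (versal deformation of the nodes, surjectivity of $\H^0(C,N_{C/S})\to T^1_C$, sheets cut out by coordinate subspaces $\{t_j=0\}$) is exactly the standard way of making that precise. One small caveat: in your last paragraph you propose to establish the vanishing $\H^1(S,L\otimes\cI_{\Delta,S})=0$ via the hypothesis $\Pic(S)=\Z[L]$, which is not assumed in the proposition and is not needed --- as you yourself note earlier, the required independence is already encoded in the tangent-space statement $\dim|L(-\Delta)|=p-\epsilon$ of Proposition~\ref{prop:sevv}, which holds for every $(S,L)\in\mathcal K_p$.
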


\noindent
As an immediate consequence, we have:

\begin{cor}\label{cor:incl}  
Let $(S,L)\in \mathcal K_p$ and let $V$, $V'$ be irreducible
components of $\sev L \delta$ and $\sev L {\delta'}$, with
$\delta\leqslant \delta'$. If $V'$ intersects the Zariski closure
$\overline V$ of $V$, then $V'\subset \overline V$.
\end{cor}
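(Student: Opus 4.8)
The plan is to fix a curve $C$ in the nonempty intersection $V'\cap\overline V$ and to argue entirely in a neighbourhood of $C$, separating the two cases $\delta=\delta'$ and $\delta<\delta'$. First I would record the dimension inputs. By Proposition~\ref{prop:sevv} the variety $\sev L \delta$ is smooth and pure of dimension $p-\delta$, so $\overline V$ is irreducible of dimension $p-\delta$, and $\overline V\subseteq\csev L\delta$ because $V\subseteq\sev L\delta$; likewise $V'$ is smooth of dimension $p-\delta'$. Since $C\in V'$, it has exactly $\delta'$ nodes, say $p_1,\dots,p_{\delta'}$.

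In the case $\delta=\delta'$ I would use only smoothness of $\sev L\delta$. The irreducible components of a smooth variety are disjoint and each is open and closed; in particular, any point $x\in\sev L\delta\cap\overline V$ has a neighbourhood in $\sev L\delta$ meeting $V$ (because $x\in\overline V$), so the component of $\sev L\delta$ through $x$ meets $V$ and therefore equals $V$. Hence $\sev L\delta\cap\overline V=V$. As $C$ has exactly $\delta=\delta'$ nodes it lies in $\sev L\delta$, so $C\in\sev L\delta\cap\overline V=V$; thus $C\in V\cap V'$, and two components of the smooth variety $\sev L\delta$ meeting each other must coincide, giving $V=V'\subseteq\overline V$.

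The case $\delta<\delta'$ is the heart of the matter, and here I would invoke Proposition~\ref{prop:ext} with its $V$ taken to be our $V'$ and its $\epsilon$ taken to be $\delta'$. Part~(i) gives $V'\subseteq\csev L\delta$, while parts~(ii)--(iii) describe the germ of $\csev L\delta$ at $C$ as the union of the $\binom{\delta'}{\delta}$ smooth sheets $\mathcal V_{\mathfrak d}$, each of dimension $p-\delta$, indexed by the $\delta$-subsets $\mathfrak d\subseteq\{p_1,\dots,p_{\delta'}\}$. Two observations then close the argument. On one hand, $\overline V\subseteq\csev L\delta$ is irreducible through $C$, so its analytic germ at $C$ is pure of dimension $p-\delta$ and contained in $\bigcup_{\mathfrak d}\mathcal V_{\mathfrak d}$; since each sheet is irreducible of that same dimension, every local branch of $\overline V$ at $C$ must coincide with one of the $\mathcal V_{\mathfrak d}$, and as $C\in\overline V$ at least one sheet $\mathcal V_{\mathfrak d_0}$ lies in $\overline V$ near $C$. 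On the other hand, reading off the local description in (ii)--(iii), near $C$ the component $V'$ is the locus where all $\delta'$ nodes persist, i.e. $\bigcap_{\mathfrak d}\mathcal V_{\mathfrak d}$; in particular $V'\subseteq\mathcal V_{\mathfrak d}$ for every $\mathfrak d$. Combining the two, $V'\subseteq\mathcal V_{\mathfrak d_0}\subseteq\overline V$ in a neighbourhood of $C$.

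It then remains to globalize: $V'\cap\overline V$ is closed in $V'$ and contains a nonempty open subset of the irreducible variety $V'$, hence is dense and therefore equals $V'$, so $V'\subseteq\overline V$. The only genuinely delicate point I anticipate is the identification of the local branches of $\overline V$ with the sheets $\mathcal V_{\mathfrak d}$; this rests on the purity of dimension of $\overline V$ together with the smoothness and equidimensionality of the sheets supplied by Proposition~\ref{prop:ext}(iii). Everything else is formal, which is why this can fairly be billed as an immediate consequence of Proposition~\ref{prop:ext}.
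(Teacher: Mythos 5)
Your argument is correct and is precisely the intended (but omitted) deduction from Proposition~\ref{prop:ext}: the paper states the corollary as an immediate consequence, and your case split, the identification of the local branches of $\overline V$ at a point of $V'\cap\overline V$ with sheets $\mathcal V_{\mathfrak d}$, and the observation that the germ of $V'$ lies in every sheet are exactly the details being left to the reader. The globalization via irreducibility of $V'$ is also handled correctly.
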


\subsection{Brill--Noether theory of curves on K3 surfaces} 
We will use the following results.

\begin{thm}[{\cite[Theorem~5.3 and Remark~5.6]{KLM}}]
\label{t:KLM}
Let $(S,L)$ be such that $\Pic(S)=\Z[L]$, and
$V \subset \gsev L g$ a non-empty reduced scheme.
Let $k$ be a positive integer.
Assume that for all $C \in V$, there exists a $g^1_k$ on the
normalisation $\tilde C$ of $C$.
Then one has
\begin{equation*}
\dim(V) + \dim \bigl( G^1_k(\tilde C) \bigr)
\leq 2k-2
\end{equation*}
for general $C \in V$.
\end{thm}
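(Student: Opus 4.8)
The plan is to attack this through Lazarsfeld--Mukai bundles on $S$, adapted to the singular (normalised) setting. First I would form the universal parameter space $\mathcal G$ of pairs $(C,A)$ with $C \in V$ and $A$ a base-point-free $g^1_k$ on the normalisation $\tilde C$; one reduces to the base-point-free case since replacing $A$ by $A(-B)$, with $B$ its base locus, only lowers the degree and the target bound $2k-2$ is monotone in $k$. Over a general $C$ the fibre of $\mathcal G \to V$ is $G^1_k(\tilde C)$, so $\dim \mathcal G = \dim V + \dim G^1_k(\tilde C)$ at the general point, and the statement becomes the single inequality $\dim \mathcal G \le 2k-2$.

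To each pair I attach the Lazarsfeld--Mukai sheaf $E = E_{C,A}$ on $S$, defined by
\begin{equation*}
0 \to E^\vee \to H^0(\tilde C, A)\otimes \mathcal O_S \to \bar\nu_* A \to 0,
\end{equation*}
where the middle map is evaluation and $\bar\nu : \tilde C \to S$ is the normalisation followed by the inclusion. A Chern class computation (using $\omega_S \cong \mathcal O_S$, with the $\delta = p-g$ nodes bumping the second Chern class from $k$ to $k+\delta$) produces a rank-$2$ bundle with $c_1(E)=L$ and $c_2(E)=k+\delta$; Riemann--Roch then gives $h^0(E)=\chi(E)=g-k+3$, with $H^1(E)=H^2(E)=0$ for the general member of $\mathcal G$, and Mukai vector $v(E)=(2,L,g-k+1)$ of self-intersection $v(E)^2 = 4k+2\delta-2g-6$.

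Next I would introduce the classifying map $\phi : \mathcal G \to M_v$ to the moduli space $M_v$ of semistable sheaves with Mukai vector $v=v(E)$, of dimension $v(E)^2+2 = 4k+2\delta-2g-4$, and estimate $\dim \mathcal G \le \dim(\operatorname{im}\phi)+\dim(\text{general fibre of }\phi)$. A pair $(C',A')$ lying over a fixed $E$ is recovered from a pencil of sections of $E$, i.e.\ from a point of the Grassmannian $\Gr(2,H^0(E))$, so a priori the fibre has dimension at most $2\bigl(h^0(E)-2\bigr)=2(g-k+1)$. Combining the two estimates naively yields $\dim \mathcal G \le 2k+2\delta-2$, which already settles the smooth case $\delta=0$ on the nose.

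The main obstacle is the excess term $2\delta$ in the singular case, and its removal is the real content of the theorem. I expect the resolution to be that fixing the geometric genus to be exactly $g$ — equivalently, forcing the reconstructed member $C'$ to acquire $\delta$ nodes rather than to move freely in $|L|$ carrying a $g^1_{k+\delta}$ — imposes $2\delta$ conditions, so that either the general fibre of $\phi$ drops to dimension $2(g-k+1)-2\delta$ or the image $\operatorname{im}\phi$ has codimension $\ge 2\delta$ in $M_v$; in both cases the two contributions then sum to exactly $2k-2$. Making this precise is the delicate step: it requires a careful local analysis of how the evaluation map degenerates at the $\delta$ nodes, together with the verification that the relevant bundles $E$ are stable (so that the equality $\dim M_v = v(E)^2+2$ genuinely applies) and that $\phi$ is regular on a dense open subset of $\mathcal G$. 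This is precisely where the Brill--Noether geometry of \emph{singular} curves on $K3$ surfaces, rather than the classical smooth theory, has to be brought to bear.
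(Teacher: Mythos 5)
This statement is not proved in the paper at all: it is imported verbatim from \cite[Theorem~5.3 and Remark~5.6]{KLM}, so there is no internal argument to compare yours against. Judged on its own terms, your proposal correctly identifies the standard framework (Lazarsfeld--Mukai sheaves attached to pairs $(C,A)$, the classifying map to a moduli space $M_v$ of sheaves with fixed Mukai vector, and the estimate $\dim \mathcal G \leq \dim(\operatorname{im}\phi) + \dim(\text{fibre})$), and your numerology is consistent: $v^2+2 = 4k+2\delta-2g-4$ plus $2(g-k+1)$ from the Grassmannian of pencils does give $2k+2\delta-2$, which settles only the smooth case.

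However, the argument as written has a genuine gap exactly where you flag it: the entire content of the theorem in the singular case is the removal of the excess $2\delta$, and your proposal replaces the proof of that step by the sentence ``I expect the resolution to be that\ldots''. This is not a minor technicality one can defer. What actually makes it work in \cite{CK} and \cite{KLM} is that the Lazarsfeld--Mukai sheaf $E_{C,A}$ of a pair with $C$ genuinely singular fails to be locally free at each singular point of $C$ that is non-neutral for $A$, and the locus of non-locally-free sheaves inside $M_v$ has codimension twice the length of $E^{\vee\vee}/E$; the nodes at which $A$ behaves neutrally require a separate fibre-dimension argument. Without carrying out this dichotomy (and without verifying stability of $E$ under the hypothesis $\Pic(S)=\Z[L]$, and the vanishing $h^1(E)=0$ that you assert but do not justify, which is needed for $h^0(E)=\chi(E)$), the bound you actually obtain is $2k+2\delta-2$, which is strictly weaker than the statement and useless for the application in Section~\ref{S:conclusion}, where the whole point is that the bound does not degrade with $\delta$. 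In short: right machine, but the engine is missing; as a self-contained proof this does not close.
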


\begin{thm}[{\cite[Theorem~3.1]{CK}}]
\label{t:CK} 
Let $(S,L)\in \mathcal K_p$ be such that $\Pic(S)= \Z[L]$,
and $C \in \gsev L g$; let $\delta=p-g$.
Let $r,d$ be nonnegative integers.
If there exists a $g^r_d$ on the normalization of $C$, then 
\[
\delta \geq 
\alpha \bigl( rg - (d-r)(\alpha r+1)
\bigr),
\quad \text{where} \quad
\alpha = 
\left\lfloor \frac {gr + (d-r)(r-1)} 
{2r(d-r)}
\right\rfloor.
\]
\end{thm}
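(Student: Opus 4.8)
The plan is to attach to the linear series a Lazarsfeld--Mukai bundle on $S$ and to read off the inequality from the rigidity of such bundles on a $K3$ surface with $\Pic(S)=\Z[L]$, where the Bogomolov inequality and the Hodge index theorem together leave very little room. Throughout I assume $r\geq 1$ and $d-r\geq 1$, the remaining cases being vacuous. Writing $f\colon \tilde C\to S$ for the normalisation followed by the inclusion, I first perform the standard reductions so as to assume the $g^r_d$ complete and base-point-free; this is harmless because completing the series and removing base points only makes the desired bound stronger. Let $A$ be the resulting line bundle of degree $d$ on $\tilde C$, so that $h^0(\tilde C,A)=r+1$ and, by Riemann--Roch on the genus-$g$ curve $\tilde C$, $h^1(\tilde C,A)=g-(d-r)\geq 0$. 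I then form the bundle $E$ from the evaluation sequence
\begin{equation*}
0\longrightarrow E^\vee\longrightarrow H^0(\tilde C,A)\otimes\mathcal{O}_S\longrightarrow f_*A ,
\end{equation*}
taking $E^\vee$ to be the kernel of the evaluation and $E=(E^\vee)^\vee$ its dual; being reflexive on the smooth surface $S$, $E$ is locally free of rank $r+1$.

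Next I record the invariants of $E$. One has $c_1(E)=L$, while the torsion sheaf $f_*A$ on the $\delta$-nodal curve $C$ — whose failure to be globally generated at the nodes is precisely the difference between $C$ and its normalisation — yields the corrected value $c_2(E)=d+\delta$, each node contributing one unit. Passing to cohomology in the defining sequence, and using $h^1(\mathcal{O}_S)=0$ and $h^2(\mathcal{O}_S)=1$ on the $K3$ surface, gives $h^1(E)=h^2(E)=0$ and
\begin{equation*}
h^0(E)=h^0(\tilde C,A)+h^1(\tilde C,A)=2r+1-(d-g)>0 ,
\end{equation*}
in agreement with the Riemann--Roch identity $\chi(E)=2\,\rk(E)+\tfrac12 c_1(E)^2-c_2(E)$ on the $K3$ surface.

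The core of the argument is then a stability analysis of $E$ with respect to $L$, and this is exactly where $\Pic(S)=\Z[L]$ is used: the first Chern class of any subsheaf is an integral multiple of $L$. If $E$ is $\mu_L$-semistable I apply the Bogomolov inequality $2\,\rk(E)\,c_2(E)\geq(\rk(E)-1)\,c_1(E)^2$. If $E$ is unstable I run through its Harder--Narasimhan filtration: each saturated destabilising subsheaf has determinant $\mathcal{O}_S(mL)$ for an integer $m\geq 1$, and comparing $mL$ with $L$ via the Hodge index theorem, together with Bogomolov on the semistable factors, produces a numerical inequality depending on the level $m$. Substituting $p=g+\delta$ throughout, I expect each admissible $m$ to give a bound of the shape
\begin{equation*}
\delta\ \geq\ m\bigl(rg-(d-r)(mr+1)\bigr),
\qquad\text{equivalently}\qquad
(mr+1)\bigl(\delta+m(d-r)\bigr)\geq mr\,p .
\end{equation*}

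It then remains to optimise over the integer $m$. The right-hand side is a downward parabola in $m$ whose real maximiser is $m^\ast=\bigl(rg-(d-r)\bigr)/\bigl(2r(d-r)\bigr)$, and its nearest integer is precisely
\begin{equation*}
\Bigl\lfloor m^\ast+\tfrac12\Bigr\rfloor
=\Bigl\lfloor\frac{gr+(d-r)(r-1)}{2r(d-r)}\Bigr\rfloor=\alpha ,
\end{equation*}
so the choice $m=\alpha$ reproduces exactly the asserted inequality (in the regime $\alpha\leq 0$, which occurs e.g.\ when $A$ is nonspecial, the bound is vacuous). The main obstacle I anticipate is not this final optimisation but the unstable case: one must show that the destabilising data of $E$ is controlled by a single integer $m$ and gives back precisely the quadratic family above, and in particular that the $\delta$ nodes enter both $c_2(E)$ and the Hodge-index comparison in the predicted way. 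Pinning this down is the crux, and it is exactly there that the hypothesis $\Pic(S)=\Z[L]$ does its work.
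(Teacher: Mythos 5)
The first thing to say is that the paper contains no proof of this statement: Theorem~\ref{t:CK} is quoted verbatim from \cite{CK} and used as a black box, so the only meaningful question is whether your proposal would stand as an independent proof of the cited result. As written it would not, because the step you yourself identify as ``the crux'' --- showing that the Harder--Narasimhan analysis of $E$ produces exactly the family of inequalities $\delta \geq m\bigl(rg-(d-r)(mr+1)\bigr)$ --- is where all of the mathematical content lies, and it is left entirely open (``I expect each admissible $m$ to give a bound of the shape\ldots''). The set-up (construction of $E$, rank, $c_1(E)=L$, vanishing of $h^1$ and $h^2$, the value of $\chi(E)$) is standard and essentially correct, but it is the easy part.

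Beyond the deferral, two concrete features of the plan would make it fail if executed naively. \emph{First}, the Bogomolov inequality is quantitatively too weak even in the semistable case: with $\rk(E)=r+1$, $c_1(E)^2=2p-2$ and $c_2(E)=d+\delta$ it gives $2(r+1)(d+\delta)\geq r(2p-2)$, i.e.\ $\delta\geq rg-(r+1)d-r$, which misses the asserted bound already at $\alpha=1$ (namely $\delta\geq rg-(r+1)(d-r)=rg-(r+1)d+r^2+r$) by $r(r+2)$. What one actually needs there is the Mukai inequality $v(E)^2\geq -2$, which presupposes that $E$ is simple (or stable, not merely semistable) --- a nontrivial point for these generalized Lazarsfeld--Mukai bundles --- and even that only delivers the case $\alpha=1$. \emph{Second}, the logic of ``optimising over $m$'' is inverted: in a Harder--Narasimhan argument the integer $m$ is dictated to you by the destabilising subsheaf, whereas $m=\alpha$ is, by your own computation, the maximiser of the right-hand side, i.e.\ the \emph{strongest} inequality in the family. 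Proving the inequality for whatever $m$ the filtration happens to hand you does not imply it for $m=\alpha$; you would have to prove the bound for every integer $m\geq 1$, or show that the filtration forces a value of $m$ giving a bound at least as strong as the one at $\alpha$, and neither is addressed. Two smaller points: the claim $c_2(E)=d+\delta$ needs an argument, since the evaluation map onto $f_*A$ is not surjective at the nodes and the kernel need not be reflexive --- a priori one only gets $c_2(E)\leq d+\delta$ (which happens to suffice for the intended use, but should be said); and the reduction to a complete, base-point-free series is not obviously harmless, because the stated bound is not monotone in $(r,d)$ under completion of the series.
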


\begin{thm}[{\cite{L,GL,BFT,Gomez}}]
\label{t:GL-BFT}
Let $(S,L)\in \mathcal K_p$ be such that $\Pic(S)= \Z[L]$,
and $C\in |L|$.
The Clifford index of $C$, computed with sections of rank one torsion
free sheaves on $C$ (see
\cite [p.~202]{DeSe} or \cite {BFT}), equals
$\lfloor \frac {p-1} 2 \rfloor$.
\end{thm}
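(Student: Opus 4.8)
The statement packages two inequalities, namely $\Cliff(C) \leq \lfloor \frac{p-1}{2}\rfloor$ and $\Cliff(C) \geq \lfloor \frac{p-1}{2}\rfloor$, where the Clifford index is the minimum of $\Cliff(A) = \deg(A) - 2\bigl(h^0(A)-1\bigr)$ over the rank one torsion free sheaves $A$ on $C$ that \emph{contribute}, i.e. satisfy $h^0(A) \geq 2$ and $h^1(A) \geq 2$. The upper bound is formal and holds for any curve of arithmetic genus $p$: the generic Clifford index $\lfloor \frac{p-1}{2}\rfloor$ is always attained from above, since a contributing sheaf of at most this Clifford index exists by a Brill--Noether existence argument (for smooth $C$ this is classical, and the torsion free version is the content of the cited generalisations). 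So the substance is the lower bound, and this is where the hypotheses $(S,L) \in \K_p$ and $\Pic(S)=\Z[L]$ must enter; the point is that no curve in $|L|$ is more special than the generic one.

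For the lower bound the method I would follow is that of Lazarsfeld--Mukai bundles. Suppose $A$ is a rank one torsion free sheaf on $C$ computing the Clifford index, which after twisting away its base locus we may assume globally generated. Regarding $A$ as a sheaf on $S$ supported on $C$, the evaluation of global sections gives a short exact sequence $0 \to F \to H^0(S,A)\otimes\cO_S \to A \to 0$ on $S$, and (after checking local freeness, or dualising) one obtains a vector bundle $E = F^\vee$. A Riemann--Roch and Chern class computation then expresses $c_1(E)$ as a multiple of $L$ and $c_2(E)$ in terms of $\deg(A)$ and $h^0(A)$, so that the discriminant of $E$ is controlled by $\Cliff(A)$. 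If the Clifford index were strictly below the generic value $\lfloor\frac{p-1}{2}\rfloor$, this forces $E$ to fail $L$-stability.

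The contradiction is then extracted from $\Pic(S)=\Z[L]$: a destabilising subsheaf of $E$ has first Chern class of the form $nL$, since no other divisor classes are available, and substituting this into the numerical relations of the previous step forces $\Cliff(A) \geq \lfloor \frac{p-1}{2}\rfloor$, contrary to assumption. Together with the upper bound this yields the asserted equality. For smooth $C$ this is precisely the theorem of Green--Lazarsfeld, resting on Lazarsfeld's proof that such curves are Brill--Noether general; the passage to arbitrary $C \in |L|$ and to rank one torsion free sheaves is the extension carried out by Gómez and in \cite{BFT}.

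The main obstacle, and the reason one cites \cite{L,GL,BFT,Gomez} rather than reproving everything by hand, is exactly this singular and torsion free case. When $C$ is singular the Lazarsfeld--Mukai kernel $F$ need not be locally free, so one must either resolve it with care or work systematically in the category of torsion free sheaves with an appropriate notion of stability. Controlling the local structure of $E$ at the singularities of $C$, and verifying that the Chern class bookkeeping and the destabilisation argument survive, is the technical heart of the statement; this is what Gómez's generalisation of the Lazarsfeld--Mukai machinery to integral curves, and the references' treatment of the nodal case, provide.
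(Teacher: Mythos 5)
The paper does not prove this statement at all: it is imported wholesale from the literature, with the equality $\Cliff(C)=\lfloor\frac{p-1}{2}\rfloor$ attributed to the combination of Lazarsfeld, Green--Lazarsfeld, Ballico--Fontanari--Tasin and G\'omez, so there is no ``paper's own proof'' to compare yours against. Your sketch is a faithful account of how those references establish the result --- Lazarsfeld--Mukai bundles, destabilisation forced by a sub-generic Clifford index, the hypothesis $\Pic(S)=\Z[L]$ killing the would-be destabilising classes, and G\'omez's extension of the machinery to rank one torsion free sheaves on integral singular curves --- and you correctly identify the singular/torsion-free case as the technical heart and the reason for citing rather than reproving. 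One caveat: your claim that the upper bound $\Cliff(C)\leq\lfloor\frac{p-1}{2}\rfloor$ is ``formal'' undersells it. For smooth curves it is the existence half of Brill--Noether theory (Kempf, Kleiman--Laksov), and for a singular $C$ with the Clifford index computed via rank one torsion free sheaves one needs an existence statement for special sheaves on the compactified Jacobian side as well; this too is part of what the cited works supply, not something that comes for free from Riemann--Roch. With that proviso, your proposal is an accurate reconstruction of the intended provenance of the theorem.
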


\section{Standard components}
\label{S:standard}

\subsection{The nodal map}
Let $(S,L)\in \mathcal K_p$. For any positive integer $n$, we denote
by $S^{[n]}$ the Hilbert scheme of $0$-dimensional subschemes of $S$ of
length $n$. Recall that $S^{[n]}$ is smooth of dimension $2n$ (see \cite
{Fog}).

Consider the morphism
\[
\phi_{L,\delta}: \sev L \delta\longrightarrow S^{[\delta]}, 
\]
called the \emph{nodal map}, which maps a curve $C\in \sev L \delta$ to
the scheme $\Delta$ of its nodes, indeed $0$-dimensional of length
$\delta$. We
set $\Phi_{L,\delta}:={\rm Im}(\phi_{L,\delta})$. If $V$ is an
irreducible component of $\sev L \delta$, we set
\[
\phi_V:= 
\restr {\phi_{L,\delta}} V\,\,\, \text{and}\,\,\,\,
\Phi_V:= {\rm Im}(\phi_{V}).
\]
Let $\Delta$ be a general point in $\Phi_{V}$. Then $\phi_{V}^
{-1}(\Delta)$ is an open subset of the linear system
$|L(-2\Delta)|:=\P (\H^ 0(S, L \otimes \mathcal I^ 2_{\Delta,S}))$ of
curves in $|L|$ singular at $\Delta$. We set
\begin{equation*}\label{eq:dim1}
\dim (|L(-2\Delta)|)=p-3\delta+h_V,
\end{equation*}
which defines the non-negative integer $h_V$, called the \emph{excess}
of $V$. By Proposition \ref {prop:sevv}, one has
\begin{equation}\label{eq:dim2}
\dim(\Phi_{V})=2\delta-h_V.
\end{equation}
The following is immediate:
\begin{lem}
\label{lem:im}  
Let $(S,L)\in \mathcal K_p$, and let $V_1, V_2$ be two distinct
irreducible components of $\sev L \delta$. Then $\Phi_{V_1}$ and
$\Phi_{V_2}$ have distinct Zariski closures in $S^{[\delta]}$.
\end{lem}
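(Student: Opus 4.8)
The plan is to argue by contradiction, exploiting the fact recorded just before the statement that over a general point of $\Phi_V$ the fibre of the nodal map is an open subset of the linear system $|L(-2\Delta)| = \P(\H^0(S, L \otimes \cI^2_{\Delta,S}))$, which depends only on $\Delta$ and \emph{not} on the component $V$. So suppose $\overline{\Phi_{V_1}} = \overline{\Phi_{V_2}} =: Z$. Since each $V_i$ is irreducible and $\phi_{V_i}$ is a morphism, $\Phi_{V_i}$ is irreducible, and hence so is $Z$.

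The first step is to locate a single point $\Delta$ that is simultaneously general for both components. For $i=1,2$ there is a dense open subset $U_i \subseteq Z$ over which $\phi_{V_i}^{-1}(\Delta)$ is a nonempty open subset of $|L(-2\Delta)|$. Because $Z$ is irreducible, $U_1 \cap U_2 \neq \emptyset$, so I would fix a $\Delta$ in this intersection. Then the two fibres $\phi_{V_1}^{-1}(\Delta)$ and $\phi_{V_2}^{-1}(\Delta)$ are nonempty open subsets of one and the same linear system $|L(-2\Delta)|$. As this system is a projective space, hence irreducible, the two open subsets must meet, so there is a curve $C$ lying in both, and in particular $C \in V_1 \cap V_2$.

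To conclude, I would invoke Proposition~\ref{prop:sevv}: the Severi variety $\sev L \delta$ is smooth, so its irreducible components are pairwise disjoint (they coincide with its connected components). The existence of $C \in V_1 \cap V_2$ then forces $V_1 = V_2$, contradicting the assumption that $V_1$ and $V_2$ are distinct. Hence $\overline{\Phi_{V_1}} \neq \overline{\Phi_{V_2}}$.

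The argument is essentially formal once one has the component-independent description of the fibre, so I do not expect any genuine difficulty; the single point needing care is the passage from ``$\Delta$ general in $\Phi_{V_1}$'' and ``$\Delta$ general in $\Phi_{V_2}$'' to one $\Delta$ general for both, which is precisely why I would first reduce to the common irreducible locus $Z$ and only then choose $\Delta$ in $U_1 \cap U_2$.
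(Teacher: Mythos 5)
Your argument is correct and is precisely the ``immediate'' argument the paper has in mind (the lemma is stated there without proof): the fibre of the nodal map over a general $\Delta$ is a nonempty open subset of the irreducible projective space $|L(-2\Delta)|$, which depends only on $\Delta$, so equal closures of $\Phi_{V_1}$ and $\Phi_{V_2}$ would force $V_1$ and $V_2$ to share a curve, contradicting the smoothness of $\sev L \delta$ from Proposition~\ref{prop:sevv}. Your care in first passing to the common irreducible closure $Z$ so as to choose one $\Delta$ general for both components is exactly the right way to make this rigorous.
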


\subsection{A useful lemma}

Let $C \in |L|$ be a reduced curve, and consider the conductor ideal
$A \subset \O_C$ of the normalization $\nu: \tilde C \to C$.
There exists a divisor $\tilde\Delta$ on $\tilde C$ such that 
$A=\nu_*\O_{\tilde C}(-\tilde\Delta)$, 
and one has
$\omega _{\tilde C} = \nu^* \omega_C \otimes \O_{\tilde
  C}(-\tilde\Delta)$.
It is a classical result that
$\nu^* |L\otimes A| = |\omega_{\tilde C}|$,
see \cite[Lemma~3.1]{DeSe}.
The same argument proves that
$\nu^* |L\otimes A^{\otimes 2}| = |\omega_{\tilde C}(-\tilde\Delta)|$.

Consider the particular case when $C$ has ordinary cusps
$p_1,\ldots,p_k$ and nodes $p_{k+1},\ldots,p_\delta$ as its only
singularities. Denote by $p_1,\ldots,p_k \in \tilde C$
the respective preimages of $p_1,\ldots,p_k \in C$ by the
normalisation $\nu$, abusing notations, 
and by $p_i'$ and $p_i''$ the two preimages
of $p_i$ for $i=k+1,\ldots,\delta$. 
Then $A$ is the product of the maximal ideals of 
$p_{1},\ldots,p_\delta$,
i.e., $A=\cI_{\Delta,S} \otimes \O_C$ with
$\Delta=\{p_1,\ldots,p_\delta\}$,
and
\[
\tilde\Delta = 2 \sum\nolimits _{i=1} ^k p_i 
+ \sum\nolimits _{i=k+1} ^\delta (p_{i}' + p_{i}'').
\]
The previous identity $\nu^* |L\otimes A^{\otimes 2}| =
|\omega_{\tilde C}(-\tilde\Delta)|$ readily implies the following.

\begin{lem}
\label{l:indep-adj^2}
Let $j$ be the closed immersion $C \hookrightarrow S$. 
One has
\[
(j\circ \nu)^* \bigl(\,
|L(-2\Delta)| \,\bigr)
= |\omega _{\tilde C} (-\tilde \Delta)|,
\]
and therefore
$
\dim \bigl(\, |L(-2\Delta)| \,\bigr)
= h^0 \bigl( \omega _{\tilde C} (-\tilde \Delta) \bigr)
$.
\end{lem}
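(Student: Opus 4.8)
The plan is to read the lemma as a dimension‑bookkeeping refinement of the identity $\nu^*|L\otimes A^{\otimes 2}| = |\omega_{\tilde C}(-\tilde\Delta)|$ recalled just above. Write $f_C\in\H^0(S,L)$ for the section cutting out $C$, and consider the restrict‑then‑pull‑back map
\[
\rho\colon \H^0\bigl(S,L\otimes\cI_{\Delta,S}^2\bigr)\longrightarrow \H^0\bigl(\tilde C,(j\circ\nu)^*L\bigr),
\qquad (j\circ\nu)^*L=\nu^*(L|_C)=\omega_{\tilde C}(\tilde\Delta),
\]
the last identities coming from adjunction $\omega_C=L|_C$ on the $K3$ surface and from $\omega_{\tilde C}=\nu^*\omega_C(-\tilde\Delta)$. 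First I would check that $\rho$ lands in the subspace $\H^0(\tilde C,\omega_{\tilde C}(-\tilde\Delta))$ and that its image fills out the \emph{complete} system $|\omega_{\tilde C}(-\tilde\Delta)|$; granting this, projectivising $\rho$ gives the asserted equality of linear systems, and computing $\ker\rho$ gives the dimension formula.

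For the inclusion $\image(\rho)\subseteq\H^0(\tilde C,\omega_{\tilde C}(-\tilde\Delta))$ I would argue locally at each singular point. A section $s\in\H^0(S,L\otimes\cI_{\Delta,S}^2)$ vanishes to order $\geq 2$ at each $p_i$; restricting to the local branches of $C$ and pulling back to $\tilde C$, one finds that $(j\circ\nu)^*s$ vanishes to order $\geq 2$ at each node‑preimage $p_i',p_i''$ (for $i>k$) and to order $\geq 4$ at each cusp‑preimage $p_i$ (for $i\leq k$), the latter via the parametrisation $t\mapsto(t^2,t^3)$. Since $\omega_{\tilde C}(-\tilde\Delta)=\nu^*(L|_C)(-2\tilde\Delta)$ with $2\tilde\Delta=4\sum_{i\leq k}p_i+2\sum_{i>k}(p_i'+p_i'')$, this is exactly the vanishing required. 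Equivalently, at the level of ideals one checks on the local models that $\cI_{\Delta,S}^2\cdot\O_C=A^2$ (the product ideal) and $A^2\cdot\O_{\tilde C}=\O_{\tilde C}(-2\tilde\Delta)$, so that $\image(\rho)$ coincides with the system $\nu^*|L\otimes A^{\otimes 2}|$ of the preamble; the completeness $\image(\rho)=\H^0(\tilde C,\omega_{\tilde C}(-\tilde\Delta))$ is then precisely the content of that identity, whose proof (the argument of \cite[Lemma~3.1]{DeSe}) shows that the surface‑adjoint curves cut out the complete system.

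It then remains to compute $\ker\rho$. A section lies in the kernel iff its restriction to $C$ vanishes identically, i.e.\ iff it is a multiple of $f_C$; since $\Pic(S)=\Z[L]$ gives $\O_S(L-C)\cong\O_S$, the space of such multiples is $\C\cdot f_C$, and $f_C$ does belong to $\H^0(S,L\otimes\cI_{\Delta,S}^2)$ because $C$ has multiplicity $\geq 2$ at every point of $\Delta$. Hence $\ker\rho=\C\cdot f_C$ is one‑dimensional, $\rho$ induces an isomorphism $\H^0(S,L\otimes\cI_{\Delta,S}^2)/\C f_C\xrightarrow{\ \sim\ }\H^0(\tilde C,\omega_{\tilde C}(-\tilde\Delta))$, and therefore $\dim|L(-2\Delta)|=h^0(S,L\otimes\cI_{\Delta,S}^2)-1=h^0(\omega_{\tilde C}(-\tilde\Delta))$.

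The point where I expect to have to be careful is the interplay between the tensor power $A^{\otimes 2}$ and the product ideal $A^2=\cI_{\Delta,S}^2\cdot\O_C$: the section $f_C$ is \emph{nonzero} in $\H^0(S,L\otimes\cI_{\Delta,S}^2)$ yet maps to $0$ on $\tilde C$, which is consistent only after passing to the torsion‑free quotient on $C$ (equivalently, working on the smooth model $\tilde C$, where $A^{\otimes 2}$ and $A^2$ induce the same subsheaf $\O_{\tilde C}(-2\tilde\Delta)$). Tracking this torsion is exactly what produces the shift by $1$ between $\dim|L(-2\Delta)|$ and $\dim|\omega_{\tilde C}(-\tilde\Delta)|$, and it is the only genuinely new input beyond the cited identity.
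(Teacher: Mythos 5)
Your proof is correct and follows essentially the same route as the paper, whose entire argument is the remark that the identity $\nu^*|L\otimes A^{\otimes 2}| = |\omega_{\tilde C}(-\tilde\Delta)|$ established in the preceding paragraph ``readily implies'' the lemma. You have simply made explicit what that implication involves: the local vanishing computations at the nodes and cusps (orders $2$ and $4$ respectively), and the identification of the kernel of the restriction map with $\C\cdot f_C$, which produces the shift by one making $\dim|L(-2\Delta)|$ equal to $h^0(\omega_{\tilde C}(-\tilde\Delta))$ rather than to $\dim|\omega_{\tilde C}(-\tilde\Delta)|$.
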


\subsection{Standard components} 
Let $V$ be an irreducible component of $\sev L \delta$.  We call $V$
\emph{standard} if $h_V=0$. If $V$ is standard and $\Delta\in \Phi_V$
is general, then
\[
0\leqslant \dim(\phi_V^ {-1}(\Delta))=\dim
(|L(-2\Delta)|)=p-3\delta,
\] hence $p\geqslant 3\delta$. Moreover if
$V$ is standard, then $\dim(\Phi_V)=2\delta$, hence $\Phi_V$ is dense
in $S^{[\delta]}$. We will prove in Proposition \ref {prop:stand}
below that if $p\geqslant 3\delta$ and if ${\rm Pic}(S)=\Z[L]$, then
there is a unique standard component of $\sev L \delta$. To do this, we
need to recall some basic fact from \cite {CC}. 

Let $Y\subset \P^ N$ be an irreducible, $n$-dimensional,
non-degenerate, projective variety. Let $\mathcal H$ be the linear
system cut out on $Y$ by the hyperplanes of $\P^ N$, i.e.,
\[
\mathcal H=\P({\rm Im}(r)) 
\quad \text{where}\quad 
r: \H^ 0(\P^ N,\O_{\P^ N}(1))\to 
\H^ 0(Y,\O_Y\otimes \O_{\P^ N}(1))
\]
is the restriction map.
Let $k$ be a non-negative integer.  The variety $Y$ is said to be
$k$-\emph{weakly defective} if given $p_0,\ldots, p_k\in Y$ general
points, the general element of 
$\mathcal H(-2p_0-\ldots -2p_k)$
has a positive dimensional singular locus,
where $\mathcal H(-2p_0-\ldots -2p_k)$ denotes
the linear system  of divisors in $\mathcal H$ singular at
$p_0,\ldots, p_k$.

\begin{prop}[{\cite[Theorem~1.4] {CC}}]
\label{prop:wd}
Let $Y\subset\P^ N$ be an irreducible, $n$-dimensional,
non-degenerate, projective variety.  Let $k$ be a non-negative integer
such that $N\geq (n+1)(k+1)$.
If $Y$ is not $k$-weakly defective, then given $p_0,...,p_{k}$
general points on $Y$, one has:\\
\begin{inparaenum}
\item [(i)] $\dim({\mathcal H}(-2p_0-...-2p_{k}))=N-(n+1)(k+1)$;\\
\item [(ii)] the general divisor $H\in {\mathcal H}(-2p_0-...-2p_{k})$
has \emph{ordinary} double points at $p_0,...,p_{k}$, i.e., double
points with tangent cone of maximal rank $n$, and no other
singularity.
\end{inparaenum}
\end{prop}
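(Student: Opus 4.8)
The plan is to recast both assertions in terms of embedded tangent spaces, reduce (i) to the non-defectivity of a secant variety via Terracini's lemma, and then analyse the singularities pointwise through the second fundamental form. First I would set up the dictionary between $\mathcal H$ and projective tangencies. For a smooth point $p\in Y$, with embedded projective tangent space $T_pY\cong\P^n\subset\P^N$, a hyperplane $H$ has a double point at $p$ on $Y$ if and only if $H\supseteq T_pY$; and this double point is ordinary of rank $n$ exactly when the induced quadric on $T_pY$ (the pairing of the second fundamental form $\mathrm{II}_p$ with the conormal vector cutting out $H$) is nondegenerate. Writing $\Lambda=\langle T_{p_0}Y,\dots,T_{p_k}Y\rangle$, we get $\mathcal H(-2p_0-\dots-2p_k)=\{H : H\supseteq\Lambda\}$, hence
\[
\dim\mathcal H(-2p_0-\dots-2p_k)=N-1-\dim\Lambda .
\]
Since each $T_{p_i}Y$ has dimension $n$ we always have $\dim\Lambda\leq(n+1)(k+1)-1$, and (i) is precisely the assertion that equality holds for general $p_\bullet$. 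By Terracini's lemma $\dim\Lambda$ equals the dimension of the secant variety $\sigma:=\sigma_{k+1}(Y)$ (the closure of the union of the spans of $k+1$ points of $Y$), so, as $(n+1)(k+1)-1\leq N-1$ by hypothesis, (i) is equivalent to $\sigma$ having the expected dimension, i.e. to $Y$ being not $k$-\emph{defective}. Finally, the smooth points of $Y$ at which $H\cap Y$ is singular are exactly the contact locus $\Sigma_H=\{q\in Y_{\reg} : T_qY\subseteq H\}$; thus the non-weak-defectivity hypothesis says precisely that $\Sigma_H$ is finite for general $p_\bullet$ and general $H\supseteq\Lambda$.

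I would prove (i) by contraposition. Suppose $Y$ is $k$-defective, i.e. $\dim\Lambda<(n+1)(k+1)-1$. A general $H\supseteq\Lambda$ is then tangent to $\sigma$ along its contact locus, and the classical Terracini-type analysis shows this locus has dimension at least the secant defect $\delta_k=(n+1)(k+1)-1-\dim\sigma>0$ and forces a positive-dimensional contact of $H$ with $Y$ itself; thus $\Sigma_H$ is positive-dimensional and $Y$ is $k$-weakly defective. This implication is the heart of the matter and the step I expect to be the main obstacle: a direct count with the two projections of the incidence variety $\{(p_\bullet,H):H\supseteq T_{p_i}Y\;\forall i\}$ does not suffice, because bounding its image in $\mathcal H$ merely by the dual variety $Y^\vee$ (of dimension $\leq N-1$) loses roughly a factor $k$; the correct input is the description of the tangency locus of the secant variety. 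Granting it, non-weak-defectivity forces $\delta_k=0$, which is (i).

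Assuming (i), the dimensions of $T_{p_0}Y,\dots,T_{p_k}Y$ add up exactly in $\Lambda$, so these tangent spaces are independent and the points $p_i$ are isolated in $\Sigma_H$ with decoupled local behaviour. At each $p_i$ the tangent cone of $H\cap Y$ is the quadric $\mathrm{II}_{p_i}(\,\cdot\,,\,\cdot\,)\cdot\xi_i$ for the conormal direction $\xi_i$ defining $H$ modulo $T_{p_i}Y$, and for general $H$ the local dimension of $\Sigma_H$ at $p_i$ equals the corank of this quadric; since $\Sigma_H$ is finite this corank is zero, so each $p_i$ is an ordinary double point of maximal rank $n$ --- the rank statement in (ii). It remains to exclude further singular points. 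Those along $\Sing Y$ are avoided because a general $H\supseteq\Lambda$ meets the lower-dimensional locus $\Sing Y$ in the expected codimension, while for extra contact points at smooth points one argues as follows: for general $q\in Y_{\reg}$ one has $T_qY\not\subseteq\Lambda$ (as $Y$ is non-degenerate and $\Lambda\subsetneq\P^N$), so imposing $T_qY\subseteq H$ on the system $\{H\supseteq\Lambda\}$ is a nontrivial condition, and a dimension count on the incidence of pairs $(q,H)$ --- using the independence of the $T_{p_i}Y$ coming from (i) together with the finiteness of $\Sigma_H$ --- shows that the general $H\in\mathcal H(-2p_0-\dots-2p_k)$ has no contact point beyond $p_0,\dots,p_k$. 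This last exclusion is delicate and is where the structural results on contact loci are really needed; granting it, the general member of $\mathcal H(-2p_0-\dots-2p_k)$ is singular exactly at the $p_i$, with ordinary double points and no other singularity, which is (ii).
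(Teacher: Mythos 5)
This proposition is quoted verbatim from Chiantini--Ciliberto [CC, Theorem~1.4]; the paper offers no proof of it, so there is no internal argument to compare yours against, and I can only assess your reconstruction on its own terms. Your translation of the statement is correct: $\mathcal H(-2p_0-\dots-2p_k)$ is the system of hyperplanes containing $\Lambda=\langle T_{p_0}Y,\dots,T_{p_k}Y\rangle$, Terracini's lemma identifies $\dim\Lambda$ with $\dim\sigma_{k+1}(Y)$ for general points, and part~(i) is therefore equivalent to $Y$ not being $k$-defective. The implication ``$k$-defective $\Rightarrow$ $k$-weakly defective'' that you then need is exactly Terracini's classical theorem (Theorem~1.1 of the same paper [CC]); your one-sentence sketch of it is not a proof, but treated as a citation this step is acceptable, and part~(i) is essentially settled.

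Part~(ii) is where the genuine gaps lie. Your key claim is that for general $H$ the local dimension of $\Sigma_H$ at $p_i$ equals the corank of the quadric $\mathrm{II}_{p_i}(\cdot,\cdot)\cdot\xi_i$. Only one inequality here is formal: differentiating the condition $T_qY\subseteq H$ along a curve in $\Sigma_H$ through $p_i$ shows that $T_{p_i}\Sigma_H$ lies in the vertex of that quadric, whence $\dim_{p_i}\Sigma_H\le\mathrm{corank}$. That is the wrong direction for your argument: finiteness of $\Sigma_H$ then tells you nothing about the corank. The converse --- a degenerate quadric forces positive-dimensional contact --- is false pointwise (a cuspidal tangency has degenerate tangent cone yet isolated contact), and establishing it for the \emph{general} hyperplane tangent at $k+1$ general points is precisely the nontrivial content of (ii); it requires analysing the whole family of such hyperplanes (equivalently the relevant stratum of the dual variety and its Gauss map), not the second fundamental form at a single point. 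The exclusion of extra singular points has the same status: you rightly observe early on that a naive incidence count ``does not suffice,'' and then conclude this step by ``a dimension count \dots\ shows'' without supplying the count. Note also that if $\mathrm{Sing}(Y)$ is positive-dimensional, every hyperplane meets it and every point of $H\cap\mathrm{Sing}(Y)$ is a singular point of the divisor $H\cap Y$, so ``meeting $\mathrm{Sing}(Y)$ in expected codimension'' does not dispose of those singularities (harmless for the application in this paper, where $Y=S$ is a smooth $K3$, but a real issue for the general statement as you have framed it). In short: the reductions are right, but the two steps you yourself flag as ``the heart of the matter'' and ``delicate'' are exactly the theorem, and they are asserted rather than proved.
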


In \cite [Theorem 1.3]{CC} one finds the classification of $k$-weakly
defective surfaces. After an inspection which we leave to the reader,
one sees that:

\begin{prop}
\label{prop:dk3} 
Let $(S,L)\in \mathcal K_p$ be such that $\Pic(S)=\Z[L]$, and assume
$p\geq 3$. Consider $S$ embedded in $\P^ p$ via the morphism
determined by $|L|$. Then $S$ is not $k$-weakly defective for any
non-negative integer $k$.
\end{prop}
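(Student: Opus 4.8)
The plan is to appeal to the classification of $k$-weakly defective surfaces in \cite[Theorem~1.3]{CC} and to check that $S\subset\P^p$ occurs in none of its cases, for every $k\geq 0$. I would first isolate the properties of $(S,L)$ that serve to discard the entries of the list. Since $\Pic(S)=\Z[L]$ and $L^2=2p-2\geq 4$, every effective curve on $S$ is linearly equivalent to $mL$ for some integer $m\geq 1$, and hence has degree $m(2p-2)\geq 2p-2\geq 4$ in $\P^p$; in particular $S$ contains no lines and no conics. Moreover $S$ is a smooth surface with $\omega_S\cong\mathcal O_S$, so $p_g(S)=1$ and $S$ is not uniruled (having Kodaira dimension $0$); being smooth it is not a cone; and, $L$ being primitive and very ample for $p\geq 3$, the embedding $S\hookrightarrow\P^p$ is linearly normal and nondegenerate.

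Next I would run through the entries of \cite[Theorem~1.3]{CC}. The case where the surface is a cone is excluded by the smoothness of $S$; the Veronese case is excluded because $S\not\cong\P^2$, for instance since $p_g(S)=1\neq 0=p_g(\P^2)$; and I expect each remaining entry to be a scroll, a surface swept out by a one-dimensional family of conics, or some other rational or uniruled surface. Every such possibility is incompatible with $S$ being a $K3$ surface of Picard number one: on the one hand $S$ is not uniruled, so it is neither ruled nor covered by rational curves; on the other hand a ruling or a covering family of conics would produce lines or conics on $S$, which the bound $L^2\geq 4$ forbids. For $k=0$ I would argue classically instead, noting that $0$-weak defectiveness is equivalent to the dual variety $S^{*}\subset(\P^p)^{*}$ being degenerate; but a smooth, nondegenerate, non-linear surface has finite Gauss map and zero dual defect, so a general tangent hyperplane section of $S$ acquires a single node and no further singularity, and $S$ is not $0$-weakly defective.

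The actual work is therefore bookkeeping: one must confront each item of the (somewhat long) list in \cite[Theorem~1.3]{CC} with the constraints imposed by $\Pic(S)=\Z[L]$ and $L^2\geq 4$, uniformly in $k$, and verify that none survives---precisely the inspection left to the reader above. I expect the only delicate points to be the entries featuring surfaces covered by conics, which are disposed of by the inequality $2p-2\geq 4$, and, in case \cite[Theorem~1.3]{CC} is stated only for $k\geq 1$, the separate and classical treatment of $k=0$ through the dual variety sketched above.
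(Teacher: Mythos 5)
Your proposal follows exactly the paper's route: the paper proves this proposition by citing the classification of $k$-weakly defective surfaces in \cite[Theorem~1.3]{CC} and leaving the case-by-case inspection to the reader, which is precisely the inspection you carry out (no lines or conics since every curve on $S$ has degree at least $2p-2\geq 4$, $S$ smooth hence not a cone, not rational or ruled since $\omega_S\cong\mathcal O_S$). Your sketch in fact supplies more detail than the paper does, and the key discriminating properties you isolate are the right ones.
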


We can therefore apply Proposition \ref {prop:wd} and conclude that:

\begin{prop}\label{cor:dk3}
Maintain the assumptions of Proposition~\ref{prop:dk3},
and let $\delta$ be a non-negative integer such that
$3 \delta \leq p$.
Then given $\Delta\in S^{[\delta]}$ general,
one has $\dim (|L(-2\Delta)|)=p-3\delta$ and the general curve in
$|L(-2\Delta)|$ has nodes at $\Delta$ and no other singularities.
\end{prop}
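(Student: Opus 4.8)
The plan is to deduce the statement directly from Proposition~\ref{prop:wd}, applied to the embedding $S \hookrightarrow \P^p$ furnished by Proposition~\ref{prop:dk3}. First I would record the (routine) translation between the Hilbert scheme and configurations of points: a general $\Delta \in S^{[\delta]}$ is a reduced subscheme consisting of $\delta$ distinct points $p_0,\ldots,p_{\delta-1}$ which are general on $S$. Indeed, the locus of non-reduced subschemes is a proper closed subset of $S^{[\delta]}$, and the Hilbert--Chow morphism $S^{[\delta]} \to S^{(\delta)}$ is birational, so a general $\Delta$ corresponds to a general unordered $\delta$-tuple of distinct points, hence to general points of $S^\delta$ in the sense of Proposition~\ref{prop:wd}. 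Under the identification of $|L|$ with the hyperplane system $\mathcal H$ on $S \subset \P^p$, the linear system $|L(-2\Delta)|$ of curves singular along $\Delta$ coincides with $\mathcal H(-2p_0 - \cdots - 2p_{\delta-1})$.

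Next I would apply Proposition~\ref{prop:wd} with $Y = S$, so that $n = 2$ and $N = p$, and with $k = \delta - 1$. The numerical hypothesis $N \geq (n+1)(k+1)$ then reads exactly $p \geq 3\delta$, which is precisely the assumption of the statement; this is the one place where the bound $3\delta \leq p$ is used. By Proposition~\ref{prop:dk3} the surface $S$ is not $k$-weakly defective for any $k$, in particular not $(\delta-1)$-weakly defective, so both conclusions of Proposition~\ref{prop:wd} apply. Conclusion~(i) yields
\[
\dim \bigl( |L(-2\Delta)| \bigr) = \dim \bigl( \mathcal H(-2p_0 - \cdots - 2p_{\delta-1}) \bigr) = N - (n+1)(k+1) = p - 3\delta,
\]
which is the first assertion.

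Finally, conclusion~(ii) gives that the general member of $|L(-2\Delta)|$ has ordinary double points at $p_0,\ldots,p_{\delta-1}$, with tangent cone of maximal rank $n = 2$, and no further singularity. For a curve on the smooth surface $S$, an ordinary double point whose tangent cone has rank $2$ (that is, two distinct lines) is exactly a node; hence the general curve in $|L(-2\Delta)|$ has nodes precisely at $\Delta$ and is otherwise smooth, which is the second assertion. The degenerate case $\delta = 0$ is trivial, since then $|L(-2\Delta)| = |L|$ and a general curve in $|L|$ is smooth.

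I do not expect a genuine obstacle here: the entire geometric content has already been absorbed into the non-weak-defectivity of $S$ established in Proposition~\ref{prop:dk3}, so the present statement is essentially a formal transcription of Proposition~\ref{prop:wd} into the language of linear systems on $K3$ surfaces. The only points demanding a word of care are the passage from a general $\Delta \in S^{[\delta]}$ to $\delta$ general points of $S$, and the elementary observation identifying rank-$2$ ordinary double points of hyperplane sections with nodes; both are immediate.
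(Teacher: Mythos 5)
Your proposal is correct and follows exactly the route the paper takes: the paper states Proposition~\ref{cor:dk3} as an immediate consequence of Proposition~\ref{prop:wd} applied to $S\subset\P^p$ (so $n=2$, $N=p$, $k=\delta-1$, and $N\geq(n+1)(k+1)$ becomes $p\geq 3\delta$), with non-weak-defectivity supplied by Proposition~\ref{prop:dk3}. Your additional remarks on general points of $S^{[\delta]}$ and on identifying rank-$2$ ordinary double points with nodes are the routine details the paper leaves implicit.
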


As a consequence we have:

\begin{prop}\label{prop:stand}
Under the assumptions of Proposition~\ref{cor:dk3},
there is a unique standard component
$\stsev L \delta$
of $\sev L \delta$, which is the unique irreducible component $V$ of
$\sev L \delta$ such that $\phi_V: V\to S^{[\delta]}$ is dominant.
\end{prop}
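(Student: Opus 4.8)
The plan is to prove existence and uniqueness of the standard component separately, the key tools being the smoothness of $\sev L \delta$ (Proposition~\ref{prop:sevv}) and the precise description of general fibres of the nodal map supplied by Proposition~\ref{cor:dk3}. I would start by unwinding the definitions: by \eqref{eq:dim2} a component $V$ satisfies $h_V=0$ if and only if $\dim\Phi_V=2\delta$, and since $S^{[\delta]}$ is irreducible of dimension $2\delta$ this happens exactly when $\overline{\Phi_V}=S^{[\delta]}$, i.e. when $\phi_V$ is dominant. This observation already identifies the standard components with the dominant ones, which takes care of the second assertion once the first is proved.

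For existence I would invoke Proposition~\ref{cor:dk3}: for general $\Delta\in S^{[\delta]}$ the general member of $|L(-2\Delta)|$ has nodes exactly at $\Delta$ and no further singularity, and it is irreducible because $\Pic(S)=\Z[L]$ forces every member of $|L|$ to be irreducible and reduced. Such a curve lies in $\sev L \delta$ and is sent to $\Delta$ by $\phi_{L,\delta}$, so $\phi_{L,\delta}$ is dominant. As $\sev L \delta$ has only finitely many irreducible components and $S^{[\delta]}$ is irreducible, at least one component $V$ must dominate $S^{[\delta]}$, and by the equivalence above this $V$ is standard.

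For uniqueness I would fix a general $\Delta$ and pass to the fibre $\phi_{L,\delta}^{-1}(\Delta)$. Since $\sev L \delta$ is smooth, its irreducible components are pairwise disjoint and each is open in $\sev L \delta$; hence for every component $V$ the set $\phi_V^{-1}(\Delta)=V\cap\phi_{L,\delta}^{-1}(\Delta)$ is open in $\phi_{L,\delta}^{-1}(\Delta)$, and these fibres are pairwise disjoint. For $\Delta$ general, a non-standard $V$ has $\overline{\Phi_V}\subsetneq S^{[\delta]}$, so $\Delta\notin\Phi_V$ and $\phi_V^{-1}(\Delta)=\emptyset$; thus only standard components contribute a nonempty fibre. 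On the other hand Proposition~\ref{cor:dk3} shows that $\phi_{L,\delta}^{-1}(\Delta)$ is a dense open subset of the projective space $|L(-2\Delta)|\cong\P^{p-3\delta}$, hence irreducible and in particular connected. A connected space cannot be partitioned into two or more nonempty disjoint open subsets, so exactly one standard component meets the general fibre; as every standard component does, there can be only one.

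The main obstacle I anticipate is precisely this uniqueness step, which simultaneously requires that the components of $\sev L \delta$ be disjoint and open (the force of smoothness, Proposition~\ref{prop:sevv}) and that the general fibre $\phi_{L,\delta}^{-1}(\Delta)$ be connected. The latter is where the full strength of Proposition~\ref{cor:dk3} is needed, beyond a mere dimension count: I would take care to verify that being $\delta$-nodal with nodes exactly at $\Delta$ and no other singularity is an open condition, realised by the general member of $|L(-2\Delta)|$, so that $\phi_{L,\delta}^{-1}(\Delta)$ is genuinely dense open in $\P^{p-3\delta}$ and therefore irreducible; and that for general $\Delta$ the non-standard components really drop out, so as not to spoil the connectedness argument.
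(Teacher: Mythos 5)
Your proof is correct and follows essentially the same route as the paper: existence of a dominant (hence standard) component is extracted from Proposition~\ref{cor:dk3}, and uniqueness from the fact that two distinct components cannot both have nodal image dense in $S^{[\delta]}$. Your connectedness argument for the uniqueness step is in substance a proof of the paper's Lemma~\ref{lem:im} (distinct components have distinct $\overline{\Phi_V}$), which the paper simply cites at this point.
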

\begin{proof}  
Proposition \ref {cor:dk3} implies that there is a standard component $V$
of $\sev L \delta$ such that $\phi_V: V\to S^{[\delta]}$ is
dominant. By Lemma \ref {lem:im}, it is the unique standard
component. \end{proof}

\section{A lower bound on the excess} 
\label{S:estim-h}

\noindent
This section is entirely devoted to the proof of the following:

\begin{prop}
\label{lem:main1}
Let $p \geq 11$ and $\delta >1$, $(p,\delta) \neq (12,4)$,
be integers such that $3 \delta \leq p$. 
We consider $(S,L) \in \K_p$ such that $\Pic(S) = \Z[L]$.
For all non-standard component $V$ of
$\sev L \delta$, one has $h_V\geq 3$.
\end{prop}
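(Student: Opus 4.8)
The plan is to assume for contradiction that $V$ is non-standard with $h_V\in\{1,2\}$ and to reach a contradiction through the Brill--Noether theory of curves on $K3$ surfaces. The starting point is the dictionary between the excess and linear series on normalisations recorded in Lemma~\ref{l:dim-deltatilde}: for general $C\in V$, with set of nodes $\Delta$ and preimage divisor $\tilde\Delta$ on $\tilde C$, combining Lemma~\ref{l:indep-adj^2} with Riemann--Roch shows that $h^0(\mathcal O_{\tilde C}(\tilde\Delta))=h_V+1$, so that $|\tilde\Delta|$ is a complete $g^{h_V}_{2\delta}$. Thus when $h_V=1$ (resp.\ $h_V=2$) every such $\tilde C$ carries a $g^1_{2\delta}$ (resp.\ a $g^2_{2\delta}$, hence a $2$-dimensional family of sub-pencils).

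First I would dispose of the case where $p$ is large relative to $\delta$. Feeding these pencils into Theorem~\ref{t:KLM} with $k=2\delta$ and using $\dim V=p-\delta$ (Proposition~\ref{prop:sevv}), one obtains $p\le 5\delta-2$ when $h_V=1$, and, via $\dim G^1_{2\delta}(\tilde C)\ge 2$ coming from the sub-pencils of the $g^2_{2\delta}$, the sharper $p\le 5\delta-4$ when $h_V=2$. Applying Theorem~\ref{t:CK} directly to the $g^{h_V}_{2\delta}$ trims the surviving region further. Together these inequalities confine any hypothetical counterexample to a bounded range of the form $3\delta\le p\le 5\delta-2$, in which the naive dimension counts no longer close the argument.

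The crux is then the analysis of $\overline V\cap\overline{\stsev L\delta}$ flagged in the introduction, which I would approach by exploiting the excess geometrically. For general $\Delta\in\overline{\Phi_V}$ the fibre $|L(-2\Delta)|$ has dimension $p-3\delta+h_V$ and is filled densely by $V$, since $\dim\phi_V^{-1}(\Delta)=\dim V-\dim\Phi_V=p-3\delta+h_V$; by contrast the flat limit over $\Delta$ of the standard fibres $\phi_{\stsev L\delta}^{-1}(\Delta')$ as $\Delta'\to\Delta$ is merely a linear subspace $\Sigma\cong\P^{p-3\delta}$. Because $\sev L\delta$ is smooth its components are disjoint and closed in it, so the open locus $U\subset|L(-2\Delta)|$ of curves that are $\delta$-nodal with nodes exactly at $\Delta$ is irreducible and therefore entirely contained in the dense component $V$; consequently the general member $C_0$ of $\Sigma\subset\overline{\stsev L\delta}$ cannot lie in $U$. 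Being a genus-preserving limit of $\delta$-nodal curves singular at the $\delta$ distinct points $\Delta$, it must then carry at least one cusp. This produces a $(g-h_V)$-dimensional family $W\subset\overline V\cap\overline{\stsev L\delta}$ of curves of geometric genus $g$ with $\ge 1$ cusp and otherwise nodes, to which, by Corollary~\ref{cor:incl}, the singular Brill--Noether bounds again apply.

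The decisive input is that each cusp forces a base point in the limit linear series: in the node-to-cusp degeneration the two node-preimages collide at the cusp preimage $q$, so the flat limit of the $g^{h_V}_{2\delta}$'s acquires $2q$ as a base divisor and reduces to an effective $g^{h_V}_{2\delta-2}$ on $\tilde C_0$ (more generally $g^{h_V}_{2\delta-2m}$ if $m$ nodes are sent to cusps simultaneously). Plugging this lower-degree, more special pencil into Theorem~\ref{t:CK}, and where needed into Theorem~\ref{t:KLM} for the family $W$, together with the maximal Clifford index $\lfloor(p-1)/2\rfloor$ of Theorem~\ref{t:GL-BFT}, should yield the sought numerical contradiction throughout the residual range; the finitely many boundary values are exactly what the hypotheses $p\ge 11$, $\delta>1$ and $(p,\delta)\neq(12,4)$ exclude (note that $\delta>1$ is already needed for $h_V\ge 3$ to be possible at all, since $h_V\le\dim S^{[\delta]}=2\delta$). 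I expect the main difficulty to lie precisely in this last step: rigorously controlling the limit linear series through the degeneration, i.e.\ proving that the cusps contribute genuine base points so that the estimates valid on the smooth members of $V$ survive intact on the cuspidal boundary $W$.
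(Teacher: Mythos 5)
Your setup is sound up to and including the construction of a positive-dimensional component $W$ of $\overline V\cap\cstsev L \delta$ swept out by the limits of the standard fibres, and your preliminary reductions via Theorems~\ref{t:CK} and~\ref{t:KLM} correctly confine the problem to a range roughly $3\delta\leq p\leq 5\delta-2$. But from that point on there is a genuine gap, and it is precisely where you yourself flag the difficulty. First, your claim that the general member $C_0$ of the limit system is a \emph{genus-preserving} limit, hence carries a cusp, is unjustified: by Proposition~\ref{prop:dim} one only gets $p_g(C_0)\geq \dim(W)=g-h_V$, so the genus may drop (extra nodes appearing away from $\Delta$), and for $h_V=2$ the singularities at $\Delta$ may also degenerate to tacnodes or ramphoid cusps rather than cusps. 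These cases are not covered by your cusp-and-base-point mechanism and require separate arguments (in the paper, the genus-dropping cases are killed by Proposition~\ref{prop:ext}, and the tacnode/ramphoid case is where Theorem~\ref{t:GL-BFT} and the exclusion $(p,\delta)\neq(12,4)$ actually enter — your explanation of that hypothesis as generic ``boundary noise'' is not what happens).

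Second and more fundamentally, your proposed endgame — feeding the degenerate series $g^{h_V}_{2\delta-2m}$ coming from base points at the cusps back into Theorems~\ref{t:CK} and~\ref{t:KLM} — does not close the numerical gap. For instance, with $h_V=1$ and one cusp, Theorem~\ref{t:KLM} applied to a $g^1_{2\delta-2}$ on the members of the $(g-1)$-dimensional family $W$ gives only $p\leq 5\delta-5$, nowhere near $p<3\delta$; Theorem~\ref{t:CK} is weaker still. The paper's actual contradiction is not numerical but local-analytic: at a general $C\in W$ one shows $h^0(C,N'_{C/S})=\dim(W)$, hence the map $\H^0(C,N_{C/S})\to T^1_C$ is surjective, so that $\csev L \delta$ is locally at $C$ the product of the equigeneric loci in the versal deformation spaces of the individual singularities; since the equigeneric locus of a cusp (or tacnode) is unibranched, $\csev L \delta$ is unibranched along $W$, which is incompatible with $W$ lying in the two distinct components $\overline V$ and $\cstsev L \delta$. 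This deformation-theoretic unibranchedness argument — together with the canonical-map/Clifford-index bounds limiting the number of cusps to one or two — is the missing decisive idea; without it (or a genuine substitute) your proof does not go through on the residual range.
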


Let $V$ be a non-standard component of $\sev L \delta$ as above. One
has $h_V >0$ by definition, and we shall proceed by contradiction to
show that $h_V$ may neither equal $1$ nor $2$. 

\subsection{Proof that $h_V \neq 1$}

In the setup of Proposition~\ref{lem:main1}, we assume by
contradiction that $h_V=1$.
Then the closure of $\Phi_V$ is an irreducible divisor in
$S^{[\delta]}$. Let $\Delta\in \Phi_{V}$ be a general point. 
It can be seen as the limit of a general 1-dimensional family
$\{\Delta_t\}_{t\in \D}$, where $\D$ is a complex disk, and $\Delta_t$ is
general in $S^{[\delta]}$ for $t\neq 0$. 
In particular, we may assume 
$\dim(\phi^{-1}_{L,\delta}(\Delta_t))=p-3\delta$ for $t\in \D-\{0\}$.
We define the limit $\mathcal L_\Delta$ of $\phi^
{-1}_{L,\delta}(\Delta_t)$ as $t\to 0$ as the fibre over $0 \in \D$
of the closure of 
$\bigcup _{t\neq 0} \bigl( \phi^ {-1}_{L,\delta}(\Delta_t) \bigr)$
inside $|L| \times \D$.
Then:\\
\begin{inparaenum}
\item $\mathcal L_\Delta$ is a
$(p-3\delta)$-dimensional sublinear system of $|L(-2\Delta)|$;\\
\item  $\mathcal L_\Delta$ is contained in 
$\overline V \cap \cstsev L \delta$;\\
\item  since $\sev L \delta$ is smooth, by (ii) the general
curve in $\mathcal L_\Delta$ does not belong to $\sev L \delta$, i.e.,
it has singularities worse than only nodes at the points of
$\Delta$;\\
\item  as $\Delta$ moves in a suitable dense open subset $U$ of
$\Phi_V$, the union $\bigcup _{\Delta\in U} \mathcal L_\Delta$ describes a
locally closed subset of dimension
\[
\dim(\Phi_V)+(p-3\delta)=(2\delta-1)+(p-3\delta)=g-1,
\]
which is dense in an irreducible component $W$ of 
$\overline V\cap \cstsev L \delta$, 
where $g=p-\delta$ as usual.
\end{inparaenum}

Let $C$ be the general curve in $W$, which belongs to $\mathcal
L_\Delta$ for some general $\Delta\in \Phi_V$. By (i) and (iii) above, $C$
is singular at $\Delta$ but it is not $\delta$-nodal. By Proposition
\ref {prop:dim} one has $p_g(C)\geqslant g-1$, hence $g-1\leqslant
p_g(C)\leqslant g$. We will show that each of these two possible
values leads to a contradiction, thus proving that $h_V\neq 1$.

\subsubsection{Case $p_g(C)=g-1$} 
Since $\dim(W)=g-1$, it follows from Proposition \ref {prop:sev} that
$W$ is dense in the closure of a component of $\sev L {\delta+1}$, i.e.,
$C$ is a $(\delta+1)$-nodal curve, with only one extra node
$p_{\delta+1}\not\in \Delta$. By Proposition \ref {prop:ext}, locally
around $C$ there is only one smooth branch $\mathcal V$ of $\csev L
\delta$ containing $W$ and such that when the general point of
$\tilde C$ of $\mathcal V$ specializes at $C$, then set of $\delta$
nodes of $\tilde C$ specializes at $\Delta$. This is a
contradiction, because both $\overline V$ and $\cstsev L \delta$ 
contain $W$. Therefore, it is impossible that
$p_g(C)=g-1$.

\subsubsection{Case $p_g(C)= g$}
Since $C$ is singular at $\Delta=p_1+\ldots+p_\delta$, it is singular
only there, and has only nodes and (simple) cusps (with local equation
$x^ 2=y^ 3$); it must have at least one cusp by (iii).

\begin{claim}\label{claim:1} $C$ has only one cusp. \end{claim}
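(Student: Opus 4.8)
The plan is a dimension count, the point being that each cusp ought to cost one unit of excess, so that $h_V=1$ can accommodate at most one cusp. Write $k\geq1$ for the number of cusps of the general $C\in W$ (at least one by (iii)), and let $p_1,\dots,p_k$ be the cusp points, a fixed subset of $\Delta$ because $\mathcal L_\Delta$ is irreducible. Since $h_V=1$, recall that $\dim\mathcal L_\Delta=p-3\delta$ whereas $\dim|L(-2\Delta)|=p-3\delta+1$, so $\mathcal L_\Delta$ is a hyperplane in $|L(-2\Delta)|$. I shall assume $k\geq2$ and look for a contradiction.

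First I would introduce $\Sigma\subseteq|L(-2\Delta)|$, the closed locus of curves whose double point at each of $p_1,\dots,p_k$ is not a node. The general member of $\mathcal L_\Delta$ is cuspidal at every $p_i$, so $\mathcal L_\Delta\subseteq\Sigma$, and comparing dimensions yields at once $\codim_{|L(-2\Delta)|}\Sigma\leq1$. On the other hand, imposing on a double point that its tangent cone degenerate to a double line is a single condition (the discriminant of a binary quadric), one condition per point $p_i$. If these $k$ conditions at the distinct points $p_1,\dots,p_k$ were \emph{independent}, then $\Sigma$ would have codimension exactly $k$, forcing $k\leq1$, against $k\geq2$. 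Hence the heart of the matter is to prove that the cuspidal conditions at the $p_i$ are independent, equivalently that one cannot impose a cusp at some of the $p_i$ without automatically forcing cusps at the others.

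The hard part will be precisely this independence, and it is here that the Brill--Noether theory of $\tilde C$ on the $K3$ surface must intervene; indeed in the range of Proposition~\ref{lem:main1} the polarization $L$ need not be $n$-very ample for $n$ large enough to separate tangent cones at the $\delta$ points by naive projective geometry, which is exactly why the problem is delicate and why the finer results are needed. The strategy I would follow is to show that a failure of independence propagates into an excess linear series on the normalization $\tilde C$: a dependence among the degenerate-tangent-cone conditions at the $p_i$ should, after translating through the adjunction identity of Lemma~\ref{l:indep-adj^2}, manifest as a section of a suitable adjoint bundle vanishing to unexpectedly high order along the cuspidal branches, hence as a $g^r_d$ on $\tilde C$ whose existence is excluded by Theorem~\ref{t:CK} (or which violates the Clifford index value of Theorem~\ref{t:GL-BFT}) in the range $p\geq11$, $3\delta\leq p$, $(p,\delta)\neq(12,4)$. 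Pinning down this forbidden series with the correct numerics --- and verifying that the excluded case $(p,\delta)=(12,4)$, which flags the tightness of the estimate, really does require separate treatment --- is where I expect the main work to lie.
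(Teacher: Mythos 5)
Your opening reduction is sound and is in fact parallel to the paper's own count: the inclusion $\mathcal L_\Delta\subseteq\Sigma$ together with $\dim \mathcal L_\Delta=p-3\delta=\dim|L(-2\Delta)|-1$ says exactly that the $k$ cuspidal conditions drop the dimension by at most one in total, so the entire content of the claim is the ``independence'' you isolate. But that independence is precisely what you do not prove --- your final paragraph is a plan, not an argument --- so the proposal has a genuine gap at its crux. Worse, the route you sketch points at the wrong tools. Translating through Lemma~\ref{l:indep-adj^2} (the adjoint-square ideal $\mathcal I_{\Delta,S}^2$) cannot see the individual cusps: the paper instead uses the \emph{equisingular} ideal, which at a cusp is $(x,y^2)$ rather than $(x,y)^2$, and this difference is what makes the argument work. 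Pulling back by $\nu$ and removing the fixed divisor identifies $\H^0(C,L\otimes\mathcal I\otimes\mathcal O_C)$ with $\H^0(\tilde C,\omega_{\tilde C}(-p_1-\cdots-p_k))$; since this space has dimension at least $\dim W=g-1$, the cusp points impose at most one condition on the canonical series of $\tilde C$, i.e.\ they are all identified by the canonical map. Classically this forces $k=1$, or $k=2$ with $\tilde C$ hyperelliptic, and the hyperelliptic case is excluded not by Theorem~\ref{t:CK} or the Clifford index but by Theorem~\ref{t:KLM}: a $g^1_2$ on the normalizations would give $\dim W\leq 2$, against $g-1\geq \frac23 p-1>3$.

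Two smaller points. First, to make your $\Sigma$ argument rigorous you would still have to show that the discriminant condition at each $p_i$ is non-vacuous and that the $k$ conditions cut down the dimension by $k$; reformulated on $\tilde C$ this \emph{is} the injectivity of the canonical map at $p_1,\ldots,p_k$, so your approach, once completed, would collapse onto the paper's. Second, the exclusion $(p,\delta)\neq(12,4)$ is a red herring here: it plays no role in Claim~\ref{claim:1} (which sits in the proof that $h_V\neq1$), but comes from the Clifford-index estimate in the tacnodal subcase of the proof that $h_V\neq2$.
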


\begin{proof} [Proof of the Claim] 
Suppose that $C$ has cusps at $p_1,\ldots, p_k$ and nodes at
$p_{k+1},\ldots,p_\delta$, with $k\geqslant 1$. The tangent space to
the equisingular deformations of $C$ in $S$ is $\H^ 0(C, L\otimes
\mathcal I\otimes \mathcal O_C)$, where 
$\mathcal I$ is the ideal sheaf associated
to the \emph{equisingular ideal} (see \cite [\S~3] {DeSe})
$I=\prod_{i=1}^ \delta I_{p_i}$, where:\\
\begin{inparaitem}
\item $I_{p_i}=(x, y^ 2)$, if the local equation of $C$ around $p_i$
is $x^ 2=y^ 3$, for $i=1,\ldots,k$;\\
\item  $I_{p_i}$ is the maximal ideal at $p_i$, for $i=k+1,\ldots,\delta$.
\end{inparaitem}

Let $\nu: \tilde C\to C$ be the normalization. 
We abuse notation and denote by
$p_1,\ldots, p_k$ their counterimages by $\nu$, whereas we denote by
$p_{i}'$ and $p_i''$ the two points of $\tilde C$ in the preimage of
$p_i$ by $\nu$, for $i=k+1,\ldots,\delta$.
By pulling back by $\nu$ the sections of 
$\H^ 0(C, L\otimes \cI\otimes \mathcal O_C)$
and dividing by sections vanishing at the fixed divisor 
$2\sum_{i=1}^ k p_i+\sum_{i=k+1}^ \delta(p_i'+p_i'')$ 
(see \cite [\S 3.3] {DeSe}), we find an isomorphism
\[
\nu^ *: \H^ 0(C, L\otimes \cI\otimes \mathcal O_C)
\cong 
\H^0(\tilde C,\omega_{\tilde C}(-p_1-\ldots- p_k)),
\]
hence
\begin{equation}\label{eq:in}
h^ 0(\tilde C,\omega_{\tilde C}(-p_1-\ldots- p_k))
= h^ 0(C, L\otimes \cI\otimes \mathcal O_C)
\geqslant \dim(W)=g-1.
\end{equation}
This implies that the points $p_1,\ldots,p_k$ are all identified by
the canonical map of $\tilde C$, 
which is possible only if either $k=1$, or $k=2$ and
$\dim(|p_1+p_2|)=1$.  We now prove that 
$\tilde C$ may not be hyperelliptic,
hence the latter case does not occur. 

By Theorem~\ref{t:KLM}, if $\tilde C$ is hyperelliptic then
$\dim(W) = g-1 \leq 2$. This contradicts our assumptions that $3\delta
\leq p$ and $p \geq 11$: indeed, as $g=p-\delta$ they imply that $g>3$.
Hence the only possibility left is that
$k=1$,
which proves the claim.
\end{proof}

Note moreover that since $k=1$, equality holds in \eqref{eq:in}.

Let $N_{C/S}\cong \restr L C$ be the normal bundle of $C$ in $S$. 
We have the exact sequence
\[
\textstyle
0\to N'_{C/S} \to N_{C/S} \to T^ 1_C\cong 
\mathcal O_{p_1}^2 \oplus \bigoplus_{i=2}^ \delta \mathcal O_{p_i}  \to 0
\]
where $N'_{C/S}$ is the \emph{equisingular normal sheaf} of $C$ in
$S$, and one has
$
N'_{C/S}\cong N_{C/S}\otimes \mathcal I 
$.
So $\H^ 0(C,N'_{C/S})=\H^ 0(C, L\otimes \mathcal I\otimes \mathcal O_C)$
is the tangent space to the equisingular deformations of $C$ in $S$.

We have $h^ 0(C,N_{C/S})=p$ and, as we saw, $h^
0(C,N'_{C/S})=g-1=p-\delta-1$. Thus the map
\begin{equation}\label{eq:T}
\H^ 0(C,N_{C/S})\to T^ 1_C
\end{equation}
is surjective, and
$\H^1(C,N'_{C/S})\cong \H^ 1(C,N_{C/S})\cong \C$.
Moreover the obstruction space to deformations of $C$ in $S$,
contained in $\H^ 1(C,N_{C/S})$, is zero as is well-known (see, e.g.,
\cite[\S~4.2]{DeSe}).
This implies that, locally around $C$, $\csev L \delta$ is the
product of the equigeneric deformation spaces inside the versal
deformation spaces of the singularities of $C$.  By looking at the
versal deformation space of a cusp (see, e.g., \cite [p. 98] {HM}), we
deduce that $\csev L \delta$ has a double point at $C$ with a
single cuspidal sheet.  This is a contradiction, because we assumed
that both $\overline V$ and $\cstsev L \delta$ contain
$C$. This contradiction proves that $p_g(C)=g$ cannot occur.\medskip

In conclusion we have proved that if $h_V= 1$ then
$p_g(C)$ equals either $g-1$ or $g$, but both these possibilities lead
to contradictions, hence $h_V \neq 1$.

\subsection{Proof that $h_V \neq 2$}
Still in the setup of Proposition~\ref{lem:main1}, we now assume by
contradiction that $h_V=2$. 
Then $\dim(\Phi_V)=2\delta-2$. Let $\Delta\in \Phi_{V}$ be a general
point. Again $\Delta$ can be seen as the limit of general
1-dimensional families $\{\Delta_t\}_{t\in \D}$, where $\D$ is a disk,
and $\Delta_t$ is general in $S^{[\delta]}$ for $t\neq 0$. We consider
the closure $\mathcal L_\Delta$ of the union of all
$(p-3\delta)$-dimensional sublinear systems 
$\lim _{t \to 0} \bigl( \phi^ {-1}_{L,\delta}(\Delta_t) \bigr) \subset
|L(-2\Delta)|$ 
as $\{\Delta_t\}_{t\in \D}$ varies among all families as above.
Similarly to the case $h_V=1$, we have:\\
\begin{inparaenum}
\item [(i)] $\mathcal L_\Delta$ is contained in $\overline V\cap
\cstsev L \delta$ and $\dim(\mathcal L_\Delta)=
p-3\delta+\epsilon$, with $0\leqslant \epsilon\leqslant 1$ ;\\
\item [(ii)] the general curve in $\mathcal L_\Delta$ is singular at
$\Delta$ but has singularities worse than only nodes at the points of
$\Delta$;\\
\item [(iii)] as $\Delta$ moves in a suitable dense open subset $U$ of
$\Phi_V$, the union $\bigcup _{\Delta\in U} \mathcal L_\Delta$ describes a
locally closed subset of dimension
\[
\dim(\Phi_V)+\dim(\mathcal L_\Delta)=g-2+\epsilon,
\]
which is dense in an irreducible component $W$ of 
$\overline V\cap \cstsev L \delta$.
\end{inparaenum}

If $\epsilon=1$, then $\dim(W)=g-1$ and the discussion goes as in the
case $h_V=1$. So we assume $\epsilon=0$, hence $\dim(W)=g-2$. Let $C$
be the general curve in $W$. By Proposition \ref {prop:dim}, we have
$g-2\leqslant p_g(C)\leqslant g$.
We will prove that this cannot happen, thus proving that
$h_V\neq 2$. The proof parallels the one for $h_V\neq 1$.

\subsubsection{Case $p_g(C)=g-2$} 
By Proposition~\ref {prop:sev},
$C$ is a $(\delta+2)$-nodal curve, with two extra nodes $p_{\delta+1},
p_{\delta+2}\not\in \Delta$ and  $W$ is dense in the closure of a
component of $\sev L {\delta+2}$. By Proposition~\ref {prop:ext}, locally
around $C$ there is only one smooth branch $\mathcal V$ of $\csev L
\delta$ containing $W$ and such that when the general point of
$C'$ of  $\mathcal V$ specializes at $C$, then set of $\delta$
nodes of $C'$ specializes at $\Delta$. This is a contradiction,
because both $\overline V$ and $\cstsev L \delta$
contain $W$. Hence $p_g(C)=g-2$ cannot happen.
\endproof 

\subsubsection{Case $p_g(C)=g-1$}
\label{g-1}
In this case we have the two following disjoint possibilities for $C$:\\
\begin{inparaenum}[(a)]
\item $C$ has precisely one more singularity $p_0$ besides the ones in
  $\Delta$;\\ 
\item $C$ has no singularities besides the ones in $\Delta$, 
either an ordinary tacnode or a ramphoid cusp 
(with local equation $x^2=y^{4+\epsilon}$,
$\epsilon =0$ or $1$ respectively) at one of the points
of $\Delta$, 
and nodes or ordinary cusps at the other points of $\Delta$.
\end{inparaenum}

\medskip
\par\noindent {\itshape Subcase (a)}. 
The points $p_0,\ldots, p_\delta$ are either nodes or cusps. 
Arguing as for Claim~\ref{claim:1},
we see that at most one of these points can be a cusp. 

If $C$ is $(\delta+1)$-nodal, then $W$ sits in an irreducible
component of $\csev L {\delta+1}$, and we get a contradiction as
in the proof of case $p_g(C)=g-1$ for $h_V=1$.

If $C$ is $\delta$-nodal and 1-cuspidal, then again the map \eqref
{eq:T} is surjective and the deformation space of $C$ is locally the
product of the versal deformation spaces at $p_0,\ldots, p_\delta$.
We then have the two following possibilities.

If $p_0$ is a node, then $W$ sits in a $(g-1)$-dimensional irreducible
variety $W'$ parametrizing curves which are $(\delta-1)$-nodal and
1-cuspidal, such that when the general member of $W'$ tends to $C$, its
singularities tend to $\Delta$. Moreover the map \eqref {eq:T}
is surjective for the general member of $W'$.
Then $W'$ should be contained in both $\overline V$ and 
$\cstsev L \delta$. On the other hand, as usual by now, $\csev L
\delta$ should be unibranched along $W'$, a contradiction.

If $p_0$ is the cusp, then $W$ sits in a $(g-1)$-dimensional
irreducible component $W'$ of $\csev L {\delta+1}$, such that
when the general member of $W'$ tends to $C$, its singularities
tend to $p_0,\ldots, p_\delta$. By Corollary \ref {cor:incl}, $W'$
should be contained in both $\overline V$ and $\cstsev L \delta$,
leading again to a contradiction. \medskip

\medskip
\par\noindent {\itshape Subcase (b)}. 
Suppose the tacnode 
or ramphoid cusp
is located at $p_1$, that $p_2,\ldots, p_k$ are
cusps, and $p_{k+1},\ldots,p_\delta$
are nodes:
one has $1\leqslant k\leqslant \delta$, and $k=1$ (resp.\ $\delta$)
means that there is no cusp (resp.\ no node).
If $C$ has local equation 
$x^ 2=y^ {4+\epsilon}$
around
$p_1$, then the equisingular ideal $I_{p_1}$ at $p_1$ is 
$(x,y^ {3+\epsilon})$
(see \cite [\S 3] {DeSe}).  As usual set $I=\prod_{i=1}^ \delta
I_{p_i}$ and let $\mathcal I$ be the corresponding ideal sheaf.

We have
\begin{equation}\label{eq:gen}
h^ 0(C,N'_{C/S})=h^ 0(C, N_{C/S}\otimes \mathcal I)\geqslant \dim(W)=g-2.
\end{equation}
Now we can look at $\H^ 0(C,N'_{C/S})$ as defining a linear series of
\emph{generalized divisors} on the singular curve $C$ (see \cite
{Hart} and \cite [\S 3.4] {DeSe}). Then $N'_{C/S}= N_{C/S}\otimes
\mathcal I\cong \omega_C(-E)$ where $E$ is the effective generalized
divisor on $C$ defined by the ideal sheaf $\mathcal I$ and \eqref
{eq:gen} reads
\begin{equation}\label{eq:cc}
h^ 0(C, \omega_C(-E))\geqslant g-2.
\end{equation}
The subscheme of $C$ defined by $\mathcal I$ has length 
$3+\epsilon$
at the tacnode, length 2 at each cusp and length 1 at the nodes, so that
\[
\deg(E)=3 +\epsilon +2(k-1)+\delta-k=\delta+k+1
 +\epsilon.
\]
By Riemann--Roch and Serre duality 
\cite[Theorems~1.3 and 1.4]{Hart}, one has
\begin{equation}\label {eq:bb}
h^ 0(C, \omega_C(-E))=h^ 1(C, \mathcal O_C(E))=h^ 0(C,  \mathcal
O_C(E))-\deg(E)+p-1=h^ 0(C,  \mathcal O_C(E))+g-k-2
 -\epsilon.
\end{equation}
Next we argue as in the proof of \cite [Prop.~4.8] {DeSe}. 
If $h^1(C, \mathcal O_C(E))<2$, then by \eqref {eq:cc} we have 
$g\leq 3$, 
which contradicts our assumptions that $3\delta \leq p$ and $\delta
>1$.
If on the other hand $h^ 0(C, \mathcal O_C(E))<2$, then by \eqref
{eq:cc} and \eqref {eq:bb} we have
\[
g-2\leqslant h^ 1(C, \mathcal O_C(E))\leqslant g-k-1
-\epsilon,
\]
hence $\epsilon=0$ and
$k=1$, i.e., the singularities of $C$ are precisely one ordinary
tacnode and $\delta-1$ nodes.
There is then equality in both
\eqref {eq:gen} and \eqref {eq:cc}, hence once more \eqref {eq:T} is
surjective and the deformation space of $C$ is locally the product of
the versal deformation spaces at $p_1,\ldots, p_\delta$. By looking at
the versal deformation space of a tacnode
(see \cite [p.~181]{CH}) we
see that $W$ is contained in $\csev L \delta$ which should be
unibranched along $W$, a contradiction.

So one has necessarily that $h^ i(C, \mathcal O_C(E))\geqslant 2$, for
$i=1,2$. 
Then, since ${\rm Cliff}(C)=\lfloor\frac {p-1} 2\rfloor$ by
Theorem~\ref{t:GL-BFT}, one has
\[
\textstyle
p+1-h^ 0(C, \mathcal O_C(E))-h^ 1(C, \mathcal O_C(E))
=\deg(E)-2h^ 0(C, \mathcal O_C(E))+2
\geq \lfloor\frac {p-1} 2\rfloor
\]
hence
\[
\textstyle
g-2\leqslant h^ 1(C, \mathcal O_C(E))
\leqslant p+1 -\lfloor\frac {p-1} 2\rfloor
- h^ 0(C, \mathcal O_C(E))
\leqslant p-1- \lfloor\frac {p-1} 2\rfloor
=\lceil \frac {p-1} 2\rceil.
\]
Plugging in the inequality $3\delta \leq p$, one finds
\begin{equation}
\label{ineq:cliff}
\frac 2 3 p -2
\leq p-\delta-2
=g-2
\leq \lceil \frac {p-1} 2\rceil
\leq \frac p 2
\end{equation}
which implies $p \leq 12$, hence $p=11$ or $12$.
Case $p=11$ is impossible by \eqref{ineq:cliff}, since there is no
integer between the two extremes in \eqref{ineq:cliff}. If $p=12$,
then \eqref{ineq:cliff} implies $g=8$, hence $\delta=4$, which is
excluded by assumption. Hence subcase~(b) is impossible. This
concludes the proof that $p_g(C) \neq g-1$.

\subsubsection{Case $p_g(C)=g$} 
As in the case $h_V=1$, $C$ is
singular only at $\Delta=p_1+\ldots+p_\delta$, having only nodes and
simple cusps, and it must have at least one cusp.

\begin{claim}\label{claim:2} $C$ has at most two cusps. \end{claim}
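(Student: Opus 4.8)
The plan is to imitate the proof of Claim~\ref{claim:1} almost verbatim, the only change being that the controlling bound is now $\dim(W)=g-2$ in place of $g-1$, and then to close the argument by feeding a pencil of small degree into Theorem~\ref{t:KLM}. Suppose for contradiction that $C$ has cusps at $p_1,\ldots,p_k$ and nodes at $p_{k+1},\ldots,p_\delta$ with $k\geq 3$. Arguing exactly as in the proof of Claim~\ref{claim:1}, the pull-back $\nu^*$ identifies $\H^0(C,L\otimes \cI\otimes \mathcal O_C)$ with $\H^0(\tilde C,\omega_{\tilde C}(-p_1-\ldots-p_k))$, so that, $W$ being contained in the equisingular deformation locus, one has
\[
h^0\bigl(\tilde C,\omega_{\tilde C}(-p_1-\ldots-p_k)\bigr)\geq \dim(W)=g-2 .
\]

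First I would record that $g\geq 8$: indeed the hypotheses $3\delta\leq p$ and $p\geq 11$ give $g=p-\delta\geq \lceil 2p/3\rceil\geq 8$. Set $D=p_1+\ldots+p_k$, an effective divisor of degree $k$ on $\tilde C$. Since $h^1(D)=h^0(\omega_{\tilde C}(-D))\geq g-2>0$, the divisor $D$ is special, and Riemann--Roch gives $h^0(D)=h^1(D)+k-g+1\geq k-1$. Clifford's theorem then yields $h^0(D)\leq \frac k2+1$, whence $k-1\leq \frac k2+1$, that is $k\leq 4$.

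The core step is to extract from this a pencil of degree $3$. If $k=3$ then $h^0(D)\geq 2$, so $|D|$ is a $g^1_3$; if $k=4$ then $h^0(D)\geq 3$, so $|D|$ is a $g^2_4$, which contains a $g^1_3$. Thus the normalisation of the general curve of $W$ carries a $g^1_3$, and after replacing $W$ by the dense open subset on which the number of cusps is constantly equal to $k$ the same holds for every member, so the hypothesis of Theorem~\ref{t:KLM} is satisfied. Applying that theorem to the $g^1_3$ gives
\[
\dim(W)+\dim\bigl(G^1_3(\tilde C)\bigr)\leq 2\cdot 3-2=4 ,
\]
hence $g-2\leq 4$, i.e.\ $g\leq 6$, contradicting $g\geq 8$. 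Therefore $k\leq 2$, which is the claim.

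I expect the only delicate point to be the passage from the inequality $h^0(D)\geq k-1$ to a genuinely low-degree linear series usable in Theorem~\ref{t:KLM}: Clifford caps $k$ at $4$, and one must then observe that both surviving values $k=3$ and $k=4$ produce a $g^1_3$ (directly, or by removing a base point of the $g^2_4$), valid uniformly over a dense open subset of $W$. Note that, in contrast with subcase~(b), here the clean Brill--Noether bound $g\leq 6$ contradicts $g\geq 8$ outright, so neither Theorem~\ref{t:CK} nor the excluded value $(p,\delta)=(12,4)$ enters; everything else is a transcription of the argument already carried out for $h_V=1$.
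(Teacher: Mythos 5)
Your proof is correct and takes essentially the same route as the paper: both start from the inequality $h^0\bigl(\tilde C,\omega_{\tilde C}(-p_1-\cdots-p_k)\bigr)\geq\dim(W)=g-2$, extract from it a pencil of degree at most $3$ on $\tilde C$, and rule it out with Theorem~\ref{t:KLM} against the lower bound on $g$ coming from $3\delta\leq p$ and $p\geq 11$. The only difference is organisational: you run Riemann--Roch plus Clifford to get $k\leq 4$ and a $g^1_3$ uniformly, whereas the paper phrases the same dichotomy geometrically (hyperelliptic versus trigonal via the canonical map); the key input and the final numerics are identical.
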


\begin{proof} [Proof of the Claim] The proof goes as the one of
Claim~\ref {claim:1}, from which we keep the notation. If $C$ has
cusps at $p_1,\ldots, p_k$, we have
\begin{equation}\label{eq:cusp}
h^ 0(\tilde{C},\omega_{\tilde{C}}(-p_1-\ldots- p_k))\geqslant \dim(W)=g-2.
\end{equation}

We argue by contradiction and assume $k\geqslant 3$. As in the proof
of Claim~\ref {claim:1}, we see that $\tilde{C}$ is not
hyperelliptic:
this would imply by Theorem~\ref{t:KLM} that 
$g-2 =\dim(W) \leq 2$, hence $p=6$ and $g=4$; but in this case
$\delta=2$ and since $k \leq \delta$ we are out of the range $k
\geqslant 3$.

The only other possibility is that $\tilde{C}$ is
trigonal, $k=3$, and $\dim (|p_1+p_2+p_3|)=1$. In this case, one would
have $g-2 =\dim(W) \leq 4$ by Theorem~\ref{t:KLM}, which 
together with the inequality $p \geq 3\delta$ implies that $p \leq 9$:
This is in contradiction with our assumptions. It is thus impossible
that $k \geq 3$, and the Claim is proved.
\end{proof}

By Claim~\ref {claim:2}, we have only the following two mutually
disjoint possibilities:\\
\begin{inparaenum}[(a)]
\item $C$ has precisely one cusp at $p_1$, and 
$h^0(\tilde{C},\omega_{\tilde{C}}(-p_1))=g-1> g-2 = \dim(W)$;\\
\item  $C$ has precisely two cusps at $p_1$ and $p_2$, and 
$h^ 0(\tilde{C},\omega_{\tilde{C}}(-p_1- p_2))=g-2= \dim(W)$.
\end{inparaenum}

\medskip
\par\noindent {\itshape Subcase (a)}. 
We have $h^ 0(C,N'_{C/S})=h^
0(\tilde{C},\omega_{\tilde{C}}(-p_1))=g-1$, hence the map \eqref
{eq:T} is surjective.
This implies as in the case $h_V=1$ and $p_g=g$
that $W$ is contained in a
subvariety $W'$ of dimension $g-1$ contained in 
$\csev L \delta$, whose general point corresponds to a curve which has
$\delta-1$ nodes and one cusp, and, as in the proof of case $h_V=1$,
$\csev L \delta$ is unibranched locally at any point of $W'$
corresponding to such a curve for which the map \eqref {eq:T} is
surjective. This contradicts the fact that
$W$ is an irreducible component of 
$\overline V\cap \cstsev L \delta$.

\medskip
\par\noindent {\itshape Subcase (b)}. 
In this case $W$ is dense in the equisingular deformation locus of $C$
and again the map \eqref {eq:T} is surjective.  This again implies that
$\csev L \delta$ is unibranched locally around $C$, which
leads to a contradiction.

\par\medskip
This concludes the proof that $h_V \neq 2$, hence also the proof of
Proposition~\ref{lem:main1}.

\section{Proof of Irreducibility if $p>4\delta -4$}
\label{S:conclusion}

In this section we conclude the proof of Theorem~\ref{t:main}.
So let $(S,L)$ be a primitively polarized $K3$ surface of genus $p
\geq 11$ such that $\Pic(S)=\Z [L]$, and $\delta$ be a non-negative
integer such that $4\delta-3 \leq p$.

These assumptions imply that $p \geq 3\delta$, so that the notion of
standard 
component makes sense, and the Severi variety $\sev L \delta$ has a
unique standard component by Proposition~\ref{prop:stand}. 
We assume by contradiction that $\sev L \delta$ is not irreducible:
this means that there exists a non-standard
component $V$ of the Severi variety $\sev L \delta$, and we shall see
this contradicts the inequality $p>4\delta -4$. 

Let $h=h_V$. If $\delta \leq 1$, then Theorem~\ref{t:main} is
trivial; else we're in the range of application of
Proposition~\ref{lem:main1}
(note that the case $(p,\delta)=(12,4)$ is excluded by the hypothesis
$p\geq 4\delta-3$), 
hence $h \geq 3$.

Consider a general member $C \in V$, and let 
$\Delta = \{p_1, \ldots, p_\delta \}$
be the set of its nodes. Let
$\nu: \tilde C \to C$ be the normalization map, and for all 
$i=1,\ldots,\delta$, $p_i'$ and $p_i''$ the two antecedents of $p_i$ by
$\nu$. We consider the divisor
$\tilde \Delta = \sum _{i=1} ^\delta
(p_i'+p_i'')$
on $\tilde C$.

\begin{lem}
\label{l:dim-deltatilde}
The complete linear series $|\tilde \Delta|$ is a 
$g^h _{2\delta}$.
\end{lem}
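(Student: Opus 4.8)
The plan is to read off the dimension of $|\tilde\Delta|$ from a Riemann--Roch computation on the smooth genus-$g$ curve $\tilde C$, with $g=p-\delta$, feeding in Lemma~\ref{l:indep-adj^2} to evaluate the complementary cohomology group. Since $C$ is a general, hence $\delta$-nodal, member of $V$, Lemma~\ref{l:indep-adj^2} applies with no cusps, so $\tilde\Delta = \sum_{i=1}^\delta(p_i'+p_i'')$ is precisely the divisor occurring there; it yields
\[
h^0\bigl(\tilde C,\omega_{\tilde C}(-\tilde\Delta)\bigr)
= \dim\bigl(|L(-2\Delta)|\bigr).
\]
By the very definition of the excess $h=h_V$ one has $\dim(|L(-2\Delta)|)=p-3\delta+h$, whence $h^0(\omega_{\tilde C}(-\tilde\Delta)) = p-3\delta+h$.

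Next I would invoke Serre duality on $\tilde C$, giving $h^1(\tilde C,\O_{\tilde C}(\tilde\Delta)) = h^0(\omega_{\tilde C}(-\tilde\Delta)) = p-3\delta+h$, and then apply Riemann--Roch to the degree-$2\delta$ divisor $\tilde\Delta$:
\[
h^0\bigl(\tilde C,\O_{\tilde C}(\tilde\Delta)\bigr)
= \deg(\tilde\Delta) - g + 1 + h^1\bigl(\tilde C,\O_{\tilde C}(\tilde\Delta)\bigr)
= 2\delta - (p-\delta) + 1 + (p-3\delta+h)
= h+1.
\]
Thus the complete linear series $|\tilde\Delta|$ has degree $2\delta$ and projective dimension $h^0(\O_{\tilde C}(\tilde\Delta))-1 = h$, i.e.\ it is a $g^h_{2\delta}$, as asserted.

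There is no genuine geometric obstacle here: the statement is a cohomological bookkeeping identity, and all the substance is borrowed. It is Lemma~\ref{l:indep-adj^2} that converts the ambient linear system $|L(-2\Delta)|$ into the canonical-type system $|\omega_{\tilde C}(-\tilde\Delta)|$ on the normalisation, and this is also where the only delicate point is absorbed, namely the matching of the projective dimension on the left with the vector-space dimension $h^0$ on the right (the section cutting out $C$ lies in $\H^0(S,L\otimes\cI^2_{\Delta,S})$, since a node has multiplicity two, and dies on restriction to $\tilde C$, accounting for the missing~$+1$). The definition of the excess then ensures that $h$ reappears with the correct sign after Riemann--Roch. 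The single thing to check is that $\tilde\Delta$ is exactly the divisor of Lemma~\ref{l:indep-adj^2}, which holds precisely because the general member of $V$ is $\delta$-nodal, with no cusps.
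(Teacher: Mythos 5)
Your proof is correct and is essentially the paper's own argument, just written out in full: the paper likewise obtains $h^1(\tilde\Delta)=p-3\delta+h$ from Lemma~\ref{l:indep-adj^2} (via Serre duality) and concludes by Riemann--Roch. Your remark about the projective-versus-vector-space dimension being reconciled by the section cutting out $C$ is exactly the content already absorbed into Lemma~\ref{l:indep-adj^2}.
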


\begin{proof}
One has $h^1(\tilde \Delta)=p-3\delta+h$ by Lemma~\ref{l:indep-adj^2},
and then the result follows from the Riemann--Roch formula.
\end{proof}

\begin{proof}[Conclusion of the proof of Theorem~\ref{t:main}]
We maintain the above setup.
We first apply Theorem~\ref{t:CK}: 
Let $g=p-\delta$ denote the geometric genus of $C$, and set
\begin{equation*}
\alpha =
\left\lfloor
\frac
{gh + (2\delta-h)(h-1)}
{2h(2\delta-h)}
\right\rfloor
=
\left\lfloor
\frac g {2(2\delta-h)}
+ \frac {h-1} {2h}
\right\rfloor ;
\end{equation*}
the existence of a $g^h_{2\delta}$ on $\tilde C$ implies the 
inequality
\begin{align}
\label{eq:CK}
\alpha hg +\alpha h(\alpha h +1) 
& \leq
\delta (2\alpha^2 h + 2\alpha +1).
\end{align}

Let us also apply
Theorem~\ref{t:KLM}:
The existence of a
$g^h_{2\delta}$ on $\tilde C$ induces the existence of a family
of dimension $2(h-1)$ of
$g^1_{2\delta}$'s on $\tilde C$, parametrizing the lines in the
$g^h_{2\delta}$, so it holds that
\begin{equation*}
\dim (V) + \dim \bigl( G^1_{2\delta}(\tilde C) \bigr) \geq g + 2(h-1),
\end{equation*}
which implies by Theorem~\ref{t:KLM} that
\begin{align}
g &\leq 2(2\delta-h).
\label{ineq:KLM}
\end{align}

Inequality~\eqref{ineq:KLM} implies that 
\begin{equation*}
\alpha
= \left\lfloor
\frac g {2(2\delta-h)}
+ \frac {h-1} {2h}
\right\rfloor
\leq \left\lfloor
1+\frac 1 2
\right\rfloor 
= 1.
\end{equation*}
Let us now show by contradiction that $\alpha=1$. If $\alpha  \leq 0$,
then 
\begin{equation*}
\frac
{gh + (2\delta-h)(h-1)}
{2h(2\delta-h)}
<1
\iff 
g <
(2\delta-h)(1+\frac 1 h) 
\iff
p <
\delta (3+ \frac 2 h)-h-1;
\end{equation*}
plugging in the inequality $h \geq 3$, we get that $\alpha\leq 0$ implies 
$p <
\frac {11} 3 \delta -4$,
in contradiction with our assumption that $p>4\delta-4$. Hence
$\alpha=1$. 

Therefore, \eqref{eq:CK} gives the inequalities
\begin{equation*}
hg + h( h +1) 
\leq
\delta (2 h + 3)
\iff
p \leq
\delta (3 + \frac 3 h)
-h-1.
\end{equation*}
Taking into account the fact that $h \geq 3$,
this implies that $p \leq 4\delta-4$.
In conclusion, the existence of a non-standard component of
$\sev L \delta$ is in contradiction with the inequality
$p > 4\delta-4$.
\end{proof}

\providecommand{\bysame}{\leavevmode\hboxto3em{\hrulefill}\thinspace}

\renewcommand{\thefootnote}{}
\makeatletter
\footnotetext{\@acks}
\makeatother

\end{document}